\date{}
\renewcommand{\uppercasenonmath}[1]{}
\theoremstyle{plain}
\newtheorem{theorem}{Theorem}[section]
\newtheorem{proposition}[theorem]{Proposition}
\newtheorem{lemma}[theorem]{Lemma}
\newtheorem{corollary}[theorem]{Corollary}
\theoremstyle{definition}
\newtheorem{example}[theorem]{Example}
\newtheorem{definition}[theorem]{Definition}
\theoremstyle{definition}
\newtheorem*{acknowledgement}{Acknowledgement}
\theoremstyle{remark}
\newtheorem{remark}[theorem]{Remark}
\newcommand{\pf}{\noindent\begin {proof}}
\newcommand{\epf}{\end{proof}}
\newcommand{\Ext}{\mbox{\rm Ext}}
\newcommand{\Hom}{\mbox{\rm Hom}}
\newcommand{\Tor}{\mbox{\rm Tor}}
\newcommand{\Prufer}{Pr\"{u}fer}
\def\m{{\frak m}}
\def\p{{\frak p}}
\def\fd{{\rm fd}}
\def\Hom{{\rm Hom}}
\def\Ext{{\rm Ext}}
\def\Tor{{\rm Tor}}
\def\fkm{{\frak m}}
\def\Im{{\rm Im}}
\def\Nil{{\rm Nil}}
\def\NN{{\rm NN}}
\def\NP{{\rm NP}}
\def\Z{{\rm Z}}
\def\U{{\rm U}}
\def\T{{\rm T}}
\def\ZN{{\rm ZN}}
\def\Krull{{\rm Krull}}
\def\Prufer{{\rm Pr\"{u}fer}}
\begin{document}
\begin{center}
{\large  \bf Some Remarks on  $\phi$-Dedekind rings and $\phi$-\Prufer\ rings}

\vspace{0.5cm}   Xiaolei Zhang$^{a}$\ \ \ \ Wei Qi$^a$\\
%\bigskip

{\footnotesize a.\ School of Mathematics and Statistics, Shandong University of Technology, Zibo 255049, China\\

Corresponding author: Xiaolei Zhang, E-mail: zxlrghj@163.com\\}
\end{center}

\bigskip
\centerline { \bf  Abstract}
\bigskip
\leftskip10truemm \rightskip10truemm \noindent

In this paper,   the notions of nonnil-injective modules and nonnil-FP-injective modules are  introduced and studied.  Especially, we show that a $\phi$-ring $R$ is an integral domain if and only if any nonnil-injective (resp., nonnil-FP-injective) module $R$-module is injective (resp., FP-injective). Some new characterizations of $\phi$-von Neumann regular rings, nonnil-Notherian rings and nonnil-coherent rings are given.   We finally characterize $\phi$-Dedekind rings and  $\phi$-\Prufer\ rings  in terms of $\phi$-flat modules, nonnil-injective modules and nonnil-FP-injective modules.
\vbox to 0.3cm{}\\
{\it Key Words:}  nonnil-injective modules; nonnil-FP-injective modules; $\phi$-Dedekind rings; $\phi$-\Prufer\ rings.\\
{\it 2010 Mathematics Subject Classification:} Primary: 13A15; Secondary: 13F05.

\leftskip0truemm \rightskip0truemm
\bigskip

Recall from \cite{A97} that a commutative ring $R$ is  an \emph{$\NP$-ring} if the nilpotent radical $\Nil(R)$ is a prime ideal, and a \emph{$\ZN$-ring} if $\Z(R)=\Nil(R)$ where $\Z(R)$ is the set of all zero-divisors of $R$. A prime ideal $P$ of $R$ is called \emph{divided prime} if $P\subsetneq (x)$, for every $x\in R-P$. Set $\mathcal{H}=\{R|R$ is a commutative ring and \Nil(R)\ is a divided prime ideal of $R\}$. A ring $R$ is a \emph{$\phi$-ring} if $R\in \mathcal{H}$. Moreover, a $\ZN$ $\phi$-ring is said to be a \emph{strong $\phi$-ring}. Denote by $\T(R)$ the localization of $R$ at the set of all regular elements. For a  $\phi$-ring $R$, there is a ring homomorphism $\phi:\T(R)\rightarrow R_{\Nil(R)}$ such that $\phi(a/b)=a/b$. Denote by the ring $\phi(R)$ the image of $\phi$ restricted to $R$. In 2001, Badawi \cite{A01} investigated  \emph{$\phi$-chained rings} ($\phi$-CRs for short) which are $\phi$-rings $R$ such that for every $x, y \in R - \Nil(R)$ either $x | y$ or $y | x$. In 2004, Anderson and Badawi \cite{FA04} extended the notion of \Prufer\ domains to that of  \emph{ $\phi$-Pr\"{u}fer rings} which are $\phi$-rings $R$ satisfies that each finitely generated nonnil ideal is $\phi$-invertible.  The authors in \cite{FA04} characterized $\phi$-\Prufer\ rings from the perspective of ring structures, which says that a  $\phi$-ring $R$ is $\phi$-Pr\"{u}fer if and only if $R_{\fkm}$ is a $\phi$-chained ring for any maximal ideal $\fkm$ of $R$ if and only if $R/\Nil(R)$ is a \Prufer\ domain if and only if $\phi(R)$ is \Prufer.
Later in 2005, the authors in \cite{FA05} generalized the concepts of Dedekind domains to the context of rings that are in the class $\mathcal{H}$.  A $\phi$-ring is called a \emph{$\phi$-Dedekind ring} provided that any nonnil ideal is $\phi$-invertible. They also showed that  a  $\phi$-ring $R$ is $\phi$-Dedekind if and only if $R$ is nonnil-Noetherian and   $R_{\fkm}$ is a discrete $\phi$-chained ring for any maximal ideal $\fkm$ of $R$, if and only if  $R$ is nonnil-Noetherian, $\phi$-integral closed and of Krull dimension $\leq 1$, if and only if $R/\Nil(R)$ is a Dedekind domain. Some  generalizations of  Noetherian domains, coherent domains, Bezout domains and Krull domains to the context of rings that are in the class $\mathcal{H}$ are also introduced and studied (see \cite{FA04,FA05,aa16,A03,ALT06}).

The module-theoretic studies of rings in $\mathcal{H}$  started more than a decade ago. In 2006, Yang \cite{Y06} introduced nonnil-injective modules by replacing the ideals in Baer's criterion for injective modules with nonnil ideals, and obtained that a $\phi$-ring $R$ is nonnil-Noetherian if and only if any direct sum of nonnil-injective modules is nonnil-injective.
In 2013, Zhao et al. \cite{ZWT13} introduced and studied the conceptions of \emph{$\phi$-von Neumann rings} which can be defined as the following characterizations: a $\phi$-ring $R$ is  $\phi$-von Neumann if and only if its \Krull\ dimension is $0$, if and only if  any $R$-module is $\phi$-flat, if and only if $R/\Nil(R)$ is a von Neumann regular ring.  In 2018, Zhao \cite{Z18} gave a homological characterization of $\phi$-\Prufer\ rings: a strong $\phi$-ring $R$ is $\phi$-\Prufer\ if and only if each submodule of a $\phi$-flat module is $\phi$-flat, if and only if each nonnil ideal of $R$ is $\phi$-flat.

The main motivation of this paper is to give some characterizations of $\phi$-Dedekind rings and $\phi$-\Prufer\ rings in terms of some new versions of injective modules and FP-injective modules. We first introduce and study the notions of nonnil-injective modules and  nonnil-FP-injective modules, and show that a $\phi$-ring $R$ is an integral domain if and only if any nonnil-injective module $R$-module is injective, if and only if any nonnil-FP-injective module $R$-module is FP-injective (see Theorem \ref{asfap-int}). Some new characterizations of $\phi$-von Neumann regular rings, nonnil-Noetherian rings and nonnil-coherent rings  in terms of $\phi$-flat modules, nonnil-injective modules and nonnil-FP-injective modules are also given (see Theorem \ref{asfap-vn}, Proposition \ref{asfap-nn} and Proposition \ref{asfap-nc} respectively). We  obtain that a strong $\phi$-ring  $R$ is a $\phi$-Dedekind ring if and only if any  divisible module is nonnil-injective, if and only if any $h$-divisible module is nonnil-injective, if and only if  any nonnil ideal of $R$ is projective (see Theorem \ref{asfap}). We also  obtain that a strong $\phi$-ring  $R$ is  $\phi$-\Prufer, if and only if   any  divisible module is nonnil-FP-injective, if and only if any finitely generated nonnil ideal of $R$ is projective, if and only if any ideal of $R$ is $\phi$-flat, if and only if any $R$-module has an epimorphism $\phi$-flat envelope (see Theorem \ref{asfap-prufer}).

\section{nonnil-injective modules and nonnil-FP-injective modules}

Throughout this paper, $R$ denotes an $\NP$-ring with identity and all modules are unitary. We say an ideal $I$ of $R$ is \emph{nonnil} if there exists a non-nilpotent element in  $I$. Denote by $\NN(R)$ the set of all nonnil ideals of $R$.  It is easy to verify that $\NN(R)$ is a multiplicative system of ideals. That is, $R\in \NN(R)$ and  $IJ\in \NN(R)$ for any $I$ and $J$ both in $\NN(R)$. Let $M$ be an $R$-module.  Define
\begin{center}
$\phi$-$tor(M)=\{x\in M|Ix=0$ for some  $I\in \NN(R)\}$.
\end{center}
An $R$-module $M$ is said to be \emph{$\phi$-torsion} (resp., \emph{$\phi$-torsion free}) provided that  $\phi$-$tor(M)=M$ (resp., $\phi$-$tor(M)=0$). Clearly, the class of $\phi$-torsion modules is closed under submodules, quotients, direct sums and direct limits.  Thus an $\NP$-ring $R$ is $\phi$-torsion free if and only if every flat module is $\phi$-torsion free if and only if $R$ is a  $\ZN$-ring (see \cite[Proposition 2.2]{Z18}). The classes of $\phi$-torsion modules and $\phi$-torsion free modules constitute a hereditary torsion theory of finite type. Recall that  an ideal $I$ of $R$ is regular if there exists a regular element (i.e., non-zero-divisor) in $I$.
\begin{lemma}\label{nonnil-regu}
Let $R$ be a $\phi$-ring and $I$ an  ideal of $R$. Then the following assertions are equivalent:
\begin{enumerate}
    \item $I$ is a nonnil ideal of  $R$;
\item  $I/\Nil(R)$ is a nonzero ideal of  $R/\Nil(R)$;
\item $\phi(I)$ is a regular ideal of   $\phi(R)$;
\end{enumerate}
\end{lemma}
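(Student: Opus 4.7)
The plan is to prove the chain $(1) \Leftrightarrow (2) \Leftrightarrow (3)$, handling $(1) \Leftrightarrow (2)$ directly from the definition of $\Nil(R)$, and reducing $(1) \Leftrightarrow (3)$ to two structural facts about the map $\phi$: that $\ker(\phi|_R) \subseteq \Nil(R)$, and that $\phi(R)$ is a strongly $\phi$-ring.

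For $(1) \Leftrightarrow (2)$, I would simply observe that the image of $I$ in $R/\Nil(R)$, namely $(I+\Nil(R))/\Nil(R)$, is nonzero if and only if $I$ contains an element outside $\Nil(R)$, which is by definition the statement that $I$ is nonnil. This step uses only that $\Nil(R)$ is an ideal and does not invoke the divided-prime hypothesis.

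For $(1) \Leftrightarrow (3)$, the first key observation is that $\ker(\phi|_R) \subseteq \Nil(R)$: if $\phi(a) = 0$ in $R_{\Nil(R)}$ then $sa = 0$ for some $s \in R \setminus \Nil(R)$, and primeness of $\Nil(R)$ forces $a \in \Nil(R)$. Consequently, $\phi$ preserves both nilpotence and non-nilpotence: if $x \in R$ is non-nilpotent then $\phi(x)^n = \phi(x^n)$ can vanish only if $x^n \in \Nil(R)$, contradicting non-nilpotence. The second key fact is that $\phi(R)$ is a strongly $\phi$-ring, a standard result in Badawi's framework, so $Z(\phi(R)) = \Nil(\phi(R))$. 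Given these, $(1) \Rightarrow (3)$ follows by picking a non-nilpotent $x \in I$, so that $\phi(x) \in \phi(I)$ is non-nilpotent in $\phi(R)$, hence a non-zero-divisor. Conversely, for $(3) \Rightarrow (1)$, a regular element $\phi(x)$ of $\phi(I)$ lies outside $\Nil(\phi(R))$, so $\phi(x)$ is non-nilpotent in $\phi(R)$, which forces $x$ to be non-nilpotent and therefore $I$ to be nonnil.

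The main obstacle---really a bookkeeping point rather than a genuine difficulty---is cleanly invoking that $\phi(R)$ is strongly $\phi$, since it is exactly this identification $Z(\phi(R)) = \Nil(\phi(R))$ that lets us translate between regularity in $\phi(R)$ and non-nilpotence in $R$. Once that is granted, both directions reduce to tracking nilpotent versus non-nilpotent elements across the natural surjection $R \twoheadrightarrow \phi(R)$ whose kernel sits inside $\Nil(R)$.
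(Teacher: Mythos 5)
Your proposal is correct. The equivalence $(1)\Leftrightarrow(2)$ is handled exactly as in the paper, and your $(1)\Leftrightarrow(3)$ reaches the same conclusion by a slightly different packaging: where the paper verifies regularity of $\phi(s)$ for a non-nilpotent $s\in I$ by hand from the definition of the localization (if $\frac{s}{1}\frac{t}{1}=0$ in $R_{\Nil(R)}$ then $ust=0$ with $u\notin\Nil(R)$, and primeness of $\Nil(R)$ gives $us\notin\Nil(R)$, hence $\frac{t}{1}=0$), you instead quote the Anderson--Badawi fact that $\phi(R)$ is a strongly $\phi$-ring, i.e.\ $\Z(\phi(R))=\Nil(\phi(R))$, and combine it with the observation that $\ker(\phi|_R)\subseteq\Nil(R)$ so that $\phi$ preserves (non-)nilpotence. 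Since the cited identity $\Z(\phi(R))=\Nil(\phi(R))$ is itself established by precisely the localization computation the paper carries out, the two arguments are essentially the same; yours is a bit less self-contained but is legitimate within the paper's framework (the paper itself cites the relevant lemma of \cite{FA04} elsewhere), and your reverse direction is, if anything, slightly stronger than needed, since a regular element is automatically non-nilpotent without appealing to $\Z(\phi(R))=\Nil(\phi(R))$.
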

\begin{proof} $(1)\Leftrightarrow (2)$: Obvious.

$(1)\Rightarrow (3)$: Let $s$ be a non-nilpotent element in  $I$. Then $\frac{s}{1}\in \phi(I)$ is regular in $\phi(R)$. Indeed, suppose
$\frac{s}{1}\frac{t}{1}=0$ in $\phi(R)$, then there exists a non-nilpotent element $u\in R$ such that $ust=0$. Since $R$ is  a $\phi$-ring,
$us$ is non-nilpotent. Thus $\frac{t}{1}=0$ in $\phi(R)$.

$(3)\Leftarrow(1)$: Let $\frac{s}{1}$ be an regular element in $\phi(I)$ with $s\in I$. Then $s$ is non-nilpotent. Indeed, if $s^n=0$ in $R$,
then $(\frac{s}{1})^n=\frac{s^n}{1}=0$ in  $\phi(R)$ which implies $\frac{s}{1}$ is not regular in $\phi(R)$.
\end{proof}

Recall that an $R$-module $M$ is \emph{injective} (resp., \emph{FP-injective}) if $\Ext_R^1(N,M)=0$ for any (resp., finitely presented) $R$-module $N$. Now we investigate the notions of nonnil-injective modules and nonnil-FP-injective modules using $\phi$-torsion modules.

\begin{definition} Let $R$ be an $\NP$-ring and $M$ an $R$-module.
\begin{enumerate}
    \item  $M$ is called  \emph{nonnil-injective} provided that $\Ext_R^1(T,M)=0$ for any  $\phi$-torsion module $T$.
     \item  $M$ is called  \emph{nonnil-FP-injective} provided that $\Ext_R^1(T,M)=0$ for  any finitely presented $\phi$-torsion module $T$.
\end{enumerate}
\end{definition}

Certainly, an $R$-module $M$ is nonnil-injective if and only if  $\Ext_R^1(R/I,M)=0$ for any  any nonnil ideal $I$ of $R$ (see \cite[Theorem 1.7]{ZZ19}). The class of nonnil-injective modules is closed under  direct summands, direct products and extensions, and the class of nonnil-FP-injective modules is closed under pure submodules, direct sums, direct products and extensions.

Recall from \cite{ZWT13} that an $R$-module $M$ is \emph{$\phi$-flat} if $\Tor^R_1(T,M)=0$ for  any  $\phi$-torsion module $T$. It is well-known that an $R$-module $M$ is $\phi$-flat if and only if $\Tor^R_1(R/I,M)=0$ for any (finitely generated) nonnil ideal $I$ of $R$ (see \cite[Theorem 3.2]{ZWT13}).
\begin{proposition}\label{flat-FP-injective}
Let $R$ be an $\NP$-ring, then the following assertions are equivalent:
\begin{enumerate}
    \item $M$ is $\phi$-flat;
      \item   $\Hom_R(M,E)$ is nonnil-injective for any injective module $E$;
     \item   $\Hom_R(M,E)$ is nonnil-FP-injective for any injective module $E$;
    \item  if $E$ is an injective cogenerator, then $\Hom_R(M,E)$ is nonnil-injective.
       \item  if $E$ is an injective cogenerator, then $\Hom_R(M,E)$ is nonnil-FP-injective.
\end{enumerate}
\end{proposition}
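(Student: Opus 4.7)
The plan is to cycle through the implications via the standard adjunction
\[
\Ext^1_R\bigl(T,\Hom_R(M,E)\bigr)\cong \Hom_R\bigl(\Tor^R_1(T,M),E\bigr)
\]
which holds for every injective $R$-module $E$ and every $R$-module $T$. This identity turns the $\phi$-flatness condition $\Tor^R_1(T,M)=0$ into the $\Ext^1$-vanishing that defines (FP-)nonnil-injectivity, and conversely allows $\Tor$ to be detected from $\Ext$ through the faithful functor $\Hom_R(-,E)$ when $E$ is an injective cogenerator.

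First I would establish (1)$\Rightarrow$(2): take any $\phi$-torsion module $T$, note $\Tor^R_1(T,M)=0$ by $\phi$-flatness of $M$, and apply the displayed adjunction to get $\Ext^1_R(T,\Hom_R(M,E))=0$. The implications (2)$\Rightarrow$(3) and (4)$\Rightarrow$(5) are immediate because nonnil-injective trivially implies nonnil-FP-injective (the class of test modules shrinks). The implications (2)$\Rightarrow$(4) and (3)$\Rightarrow$(5) are also immediate since an injective cogenerator is in particular injective. The only substantive step left is (5)$\Rightarrow$(1), which closes the loop.

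For (5)$\Rightarrow$(1), fix an injective cogenerator $E$ and let $I$ be a finitely generated nonnil ideal of $R$. Then $R/I$ is a finitely presented $\phi$-torsion module, so the hypothesis gives $\Ext^1_R(R/I,\Hom_R(M,E))=0$. Applying the adjunction yields $\Hom_R(\Tor^R_1(R/I,M),E)=0$, and since $E$ is an injective cogenerator the functor $\Hom_R(-,E)$ is faithful on the category of $R$-modules; therefore $\Tor^R_1(R/I,M)=0$. By the cited characterization of $\phi$-flatness (\cite[Theorem 3.2]{ZWT13}), namely that $M$ is $\phi$-flat iff $\Tor^R_1(R/I,M)=0$ for every finitely generated nonnil ideal $I$, we conclude that $M$ is $\phi$-flat.

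The only mildly delicate point is justifying the adjunction $\Ext^1_R(T,\Hom_R(M,E))\cong \Hom_R(\Tor^R_1(T,M),E)$ for injective $E$; this is standard, but I would note it explicitly by taking a projective resolution $P_\bullet\to T$ and using that $\Hom_R(-,E)$ is exact (since $E$ is injective), so that the complex $\Hom_R(P_\bullet\otimes_R M,E)\cong \Hom_R(P_\bullet,\Hom_R(M,E))$ has cohomology computing both sides. Once this is in hand, the rest of the argument is formal.
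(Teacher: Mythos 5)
Your proof is correct and follows essentially the same route as the paper: the adjunction $\Ext^1_R(T,\Hom_R(M,E))\cong\Hom_R(\Tor^R_1(T,M),E)$ for (1)$\Rightarrow$(2), the trivial implications (2)$\Rightarrow$(3)$\Rightarrow$(5) and (2)$\Rightarrow$(4)$\Rightarrow$(5), and the same cogenerator argument applied to $R/I$ for finitely generated nonnil ideals $I$ to close the loop with (5)$\Rightarrow$(1). The extra remark justifying the adjunction via a projective resolution and exactness of $\Hom_R(-,E)$ is a welcome addition that the paper leaves implicit.
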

\begin{proof}
$(1)\Rightarrow (2)$: Let $T$ be a  $\phi$-torsion $R$-module and $E$ an injective $R$-module. Since $M$ is $\phi$-flat, $\Ext_R^1(T,\Hom_R(M,E))\cong\Hom_R(\Tor_1^R(T,M),E)=0$.  Thus $\Hom_R(M,E)$ is  nonnil-injective.

$(2)\Rightarrow (3)\Rightarrow (5)$: Trivial.

$(2)\Rightarrow (4)\Rightarrow (5)$: Trivial.

$(5)\Rightarrow (1)$:  Let $I$ be a finitely generated nonnil ideal of $R$ and $E$ an injective cogenerator. Since $\Hom_R(M,E)$ is nonnil-FP-injective,  $\Hom_R(\Tor_1^R(R/I,M),E)\cong \Ext_R^1(R/I,\Hom_R(M,E))=0$. Since $E$ is an injective cogenerator, $\Tor_1^R(R/I,M)=0$. Thus $M$ is $\phi$-flat.
\end{proof}

\begin{proposition}\label{R-nil}
Let $R$ be a $\phi$-ring and $E$  an  $R/\Nil(R)$-module. Then  $E$  is injective  over $R/\Nil(R)$ if and only if $E$ is nonnil-injective over $R$.
\end{proposition}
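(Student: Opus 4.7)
The plan is to reduce both directions to Baer's criterion, using the structural fact that in a $\phi$-ring every nonnil ideal of $R$ automatically contains $\Nil(R)$ (since $\Nil(R)$ is a divided prime: for any $x\in R\setminus\Nil(R)$, $\Nil(R)\subseteq(x)$). Thus the nonnil ideals of $R$ correspond, via the projection $\pi\colon R\to R/\Nil(R)$, to the nonzero ideals of $R/\Nil(R)$. The other ingredient used throughout is that $E$ being an $R/\Nil(R)$-module means $\Nil(R)\cdot E=0$, so every $R$-linear map into $E$ kills $\Nil(R)$ and factors through the quotient by $\Nil(R)$.

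For the forward direction, assume $E$ is injective as an $R/\Nil(R)$-module. By the Baer-type criterion noted after the definition ($E$ is nonnil-injective iff $\Ext_R^1(R/I,E)=0$ for every nonnil ideal $I$), it suffices to extend any $R$-linear $f\colon I\to E$ to $R$, where $I$ is a nonnil ideal. Since $\Nil(R)\subseteq I$ and $\Nil(R)E=0$, the map $f$ factors as $\bar f\colon I/\Nil(R)\to E$, which is $R/\Nil(R)$-linear. Baer's criterion over $R/\Nil(R)$ extends $\bar f$ to $\bar g\colon R/\Nil(R)\to E$, and $g:=\bar g\circ\pi$ is the required $R$-linear extension to $R$.

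For the converse, assume $E$ is nonnil-injective over $R$, and let $J$ be a nonzero ideal of $R/\Nil(R)$ with an $R/\Nil(R)$-linear map $f\colon J\to E$. Set $I=\pi^{-1}(J)$; then $\Nil(R)\subseteq I$ and $I$ is a nonnil ideal of $R$ (it contains elements outside $\Nil(R)$ because $J\ne 0$). Composing with the projection $I\twoheadrightarrow I/\Nil(R)=J$ produces an $R$-linear $\tilde f\colon I\to E$, which extends to $\tilde g\colon R\to E$ by nonnil-injectivity. Because $\tilde g(\Nil(R))\subseteq\Nil(R)E=0$, the map $\tilde g$ descends to $R/\Nil(R)\to E$, giving the desired extension of $f$. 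Baer's criterion over $R/\Nil(R)$ then yields injectivity.

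There is no real obstacle here: the only substantive point is recognising that the nonnil ideals of $R$ are exactly the preimages of the nonzero ideals of $R/\Nil(R)$ (Lemma \ref{nonnil-regu}), after which the two directions are symmetric applications of Baer's criterion combined with the fact that $\Nil(R)E=0$ lets every relevant $R$-map factor through $R/\Nil(R)$.
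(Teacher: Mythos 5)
Your overall strategy is sound and is really the paper's proof in elementary clothing: the paper establishes the same two ingredients --- the bijection between nonnil ideals of $R$ and nonzero ideals of $\overline{R}=R/\Nil(R)$, and the identification $\Hom_{\overline{R}}(I/\Nil(R),E)\cong\Hom_R(I,E)$ --- but packages the second one homologically, via $I\otimes_R\overline{R}\cong I/I\Nil(R)\cong I/\Nil(R)$ (using $I\Nil(R)=\Nil(R)$ from \cite[Lemma 1.6]{ZxlZ20}) together with the adjoint isomorphism, and then compares the two four-term $\Ext$ sequences. Your Baer-criterion version extends maps by hand instead; both arguments are legitimate and of comparable length, and the paper itself licenses the Baer-type reformulation of nonnil-injectivity via \cite[Theorem 1.7]{ZZ19}.

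There is, however, one step you assert with a justification that is not valid as stated. You claim that ``every $R$-linear map into $E$ kills $\Nil(R)$'' because $\Nil(R)E=0$. For a map $\tilde g\colon R\to E$ this is immediate ($\tilde g(n)=n\tilde g(1)=0$), so your converse direction is fine. But for $f\colon I\to E$ with $\Nil(R)\subseteq I$, the identity $f(n)=n\cdot(\text{something})$ is not automatic: in general an $R$-linear map from a submodule containing $\Nil(R)$ need not vanish on $\Nil(R)$ just because the target is killed by $\Nil(R)$. What saves you is precisely that $I$ is nonnil in a $\phi$-ring: pick a non-nilpotent $s\in I$; since $\Nil(R)$ is divided, any $n\in\Nil(R)$ satisfies $n=rs$ for some $r\in R$, and since $\Nil(R)$ is prime and $s\notin\Nil(R)$, necessarily $r\in\Nil(R)$; hence $f(n)=rf(s)\in\Nil(R)E=0$. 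Equivalently, one invokes $\Nil(R)=I\Nil(R)$ (the same fact the paper cites). With this one-line repair the factorization $f=\bar f\circ(I\twoheadrightarrow I/\Nil(R))$ is justified and your proof is complete.
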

\begin{proof} Let $I$ be a nonnil ideal of $R$. Set $\overline{R}=R/\Nil(R)$ and $\overline{I}=I/\Nil(R)$. Let  $E$ be an $\overline{R}$-module. The short exact sequence $0\rightarrow I \rightarrow R \rightarrow R/I\rightarrow 0$ induces the long exact sequence of $R$-modules: $$ 0\rightarrow\Hom_R(R/I,E)\rightarrow \Hom_{R}(R, E)\rightarrow \Hom_{R}(I,E) \rightarrow \Ext_R^1(R/I,E)\rightarrow 0.\ \ \ \ \ \ \ \ (a)$$
The short exact sequence $0\rightarrow \overline{I} \rightarrow \overline{R}\rightarrow R/I\rightarrow 0$
induces the long exact sequence of $\overline{R}$-modules:
$$0\rightarrow\Hom_{\overline{R}}(R/I,E)\rightarrow \Hom_{\overline{R}}(\overline{R},E)\rightarrow \Hom_{\overline{R}}( \overline{I},E) \rightarrow \Ext_{\overline{R}}^1(R/I,E) \rightarrow  0.\ \ \ \ \ \ \ \ (b)$$
By \cite[Lemma 1.6]{ZxlZ20}, $I\Nil(R)=\Nil(R)$. Thus $ I\otimes_R \overline{R}\cong I/I\Nil(R)\cong \overline{I}$. Consequently, we have $\Hom_{\overline{R}}( \overline{I},E) \cong \Hom_{\overline{R}}( I\otimes_R \overline{R},E)\cong  \Hom_{R}(I,\Hom_{\overline{R}}(\overline{R},E))\cong \Hom_{R}(I,E)$ by the Adjoint Isomorphism Theorem (see \cite[Theorem 2.2.16]{fk16}). Combining $(a)$ and $(b)$, we have  $E$  is injective  over $R/\Nil(R)$ if and only if $E$ is nonnil-injective  over $R$ (see Lemma \ref{nonnil-regu} and \cite[Lemma 2.4]{FA04}).
\end{proof}

\begin{proposition}\label{R-nil-1}
Let $R$ be a $\phi$-ring and $M$ an FP-injective $R/\Nil(R)$-module. Then $M$ is nonnil-FP-injective over $R$.
\end{proposition}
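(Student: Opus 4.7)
Write $\overline{R}=R/\Nil(R)$. My plan is to reduce the vanishing of $\Ext^1_R(T,M)$ for finitely presented $\phi$-torsion $T$ to the already-settled injective case (Proposition \ref{R-nil}) by embedding $M$ into an injective $\overline{R}$-module and chasing a single long exact sequence.

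The first step is to show that any finitely presented $\phi$-torsion $R$-module $T$ is itself finitely presented over $\overline{R}$. Let $t_1,\dots,t_n$ generate $T$ and let $I_i$ be a nonnil ideal annihilating $t_i$. Then the product $I_1\cdots I_n$ annihilates $T$ and is nonnil (since $\NN(R)$ is a multiplicative system of ideals), so it contains a non-nilpotent element $s$. Because $\Nil(R)$ is a divided prime, $\Nil(R)\subseteq sR$, whence $\Nil(R)T=0$ and $T$ carries a natural $\overline{R}$-module structure. Applying $-\otimes_R\overline{R}$ to a finite $R$-presentation $R^m\to R^n\to T\to 0$ and using the identity $T\otimes_R\overline{R}=T$ gives a finite $\overline{R}$-presentation $\overline{R}^m\to\overline{R}^n\to T\to 0$.

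The second step is the diagram chase. Choose an injective $\overline{R}$-module $E$ containing $M$, set $N=E/M$, and consider
\[
0\to M\to E\to N\to 0,
\]
which is exact over $\overline{R}$ and hence over $R$. By Proposition \ref{R-nil}, $E$ is nonnil-injective over $R$, so $\Ext^1_R(T,E)=0$ since $T$ is $\phi$-torsion; the long exact sequence of $\Hom_R(T,-)$ collapses to
\[
\Ext^1_R(T,M)\;\cong\;\coker\bigl(\Hom_R(T,E)\to\Hom_R(T,N)\bigr).
\]
Because $T$, $M$, $E$, $N$ are all $\overline{R}$-modules, every $R$-linear map among them is automatically $\overline{R}$-linear, so the displayed cokernel coincides with the corresponding cokernel computed in the $\overline{R}$ long exact sequence, and the latter embeds into $\Ext^1_{\overline{R}}(T,M)$. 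This group vanishes since $T$ is finitely presented over $\overline{R}$ (from Step 1) and $M$ is FP-injective over $\overline{R}$, giving $\Ext^1_R(T,M)=0$.

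The only part I expect to require genuine attention is the upgrade in Step 1 from "finitely presented $R$-module plus $\phi$-torsion" to "finitely presented $\overline{R}$-module," which hinges on the divided-prime property of $\Nil(R)$ together with multiplicativity of $\NN(R)$. Once that is in hand, the argument is a purely formal two-sequence comparison that uses Proposition \ref{R-nil} as the black box converting an $\overline{R}$-injective envelope of $M$ into an $R$-nonnil-injective replacement.
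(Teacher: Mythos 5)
Your proof is correct, but it takes a genuinely different route from the paper's. The paper works with $T\otimes_R\overline{R}$: it tensors a finite presentation of $T$ with $\overline{R}$, uses the $\phi$-flatness of $\overline{R}$ over $R$ to see that $T\otimes_R\overline{R}$ is finitely presented over $\overline{R}$, and then compares $\Ext^1_R(T,M)$ with $\Ext^1_{\overline{R}}(T\otimes_R\overline{R},M)$ via the adjoint isomorphism and the Five Lemma applied to the two presentation-induced exact sequences. You instead observe the sharper fact that a finitely presented (indeed, finitely generated) $\phi$-torsion module $T$ satisfies $\Nil(R)T=0$ --- the multiplicativity of $\NN(R)$ gives a single non-nilpotent $s$ with $sT=0$, and the divided-prime property forces $\Nil(R)\subseteq sR$ --- so $T$ \emph{is} an $\overline{R}$-module and $T\otimes_R\overline{R}\cong T$; finite presentation over $\overline{R}$ then follows from right-exactness of the tensor product alone, with no Tor computation. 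Your second step replaces the adjunction/Five Lemma comparison by a dimension shift along an $\overline{R}$-injective embedding $0\to M\to E\to N\to 0$, using Proposition \ref{R-nil} to convert $\overline{R}$-injectivity of $E$ into $R$-nonnil-injectivity, and the identification $\Hom_R(A,B)=\Hom_{\overline{R}}(A,B)$ for $\overline{R}$-modules $A,B$ to match the two cokernels. What your route buys is a reuse of Proposition \ref{R-nil} as a black box and an explicit explanation of \emph{why} the statement holds (finitely presented $\phi$-torsion $R$-modules are already finitely presented $\overline{R}$-modules); what the paper's route buys is independence from Proposition \ref{R-nil} and a template that generalizes to change-of-rings situations where the module is not literally killed by the kernel. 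One cosmetic remark: the cokernel you exhibit is in fact isomorphic to $\Ext^1_{\overline{R}}(T,M)$ (not merely embedded in it), since $\Ext^1_{\overline{R}}(T,E)=0$; either statement suffices for the conclusion.
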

\begin{proof} Let $T$ be a finitely presented $\phi$-torsion module over $R$. Then there is a short exact sequence $0\rightarrow K\rightarrow F\rightarrow T\rightarrow 0$ where $F$ is a finitely generated free $R$-module and $K$ is finitely generated $R$-module. Set  $\overline{R}=R/\Nil(R)$. By tensoring $\overline{R}$ over $R$, we obtain a long exact sequence $\Tor_1^R(T,\overline{R})\rightarrow K\otimes_R\overline{R} \rightarrow F\otimes_R\overline{R} \rightarrow T\otimes_R\overline{R} \rightarrow 0$ over $\overline{R}$. By \cite[Proposition 1.7]{ZxlZ20}, $\overline{R}$ is $\phi$-flat over $R$ thus $\Tor_1^R(T,\overline{R})=0$. It follows that $T\otimes_R\overline{R}$ is a finitely presented $\overline{R}$-module. There exists a commutative diagram of exact rows as follows:
$$\xymatrix@R=25pt@C=20pt{
  \ar[r]^{} &\Hom_R(F,M)  \ar[r]^{}\ar[d]^{\cong}& \Hom_R(K,M)   \ar[r]^{}\ar[d]^{\cong}&\Ext_R^1(T,M) \ar[r]^{}\ar[d]^{f}&0\\
   \ar[r]^{} &\Hom_{\overline{R}}(F\otimes_R\overline{R},M)\ar[r]^{} &\Hom_{\overline{R}}(K\otimes_R\overline{R},M) \ar[r]^{} &\Ext^1_{\overline{R}}(T\otimes_R\overline{R},M) \ar[r]^{} &0.
}$$
By the Adjoint isomorphism, the left two homomorphisms are isomorphisms. It follows from the Five Lemma that $f$ is also an isomorphism. Since   $M$ is FP-injective  over $\overline{R}$, $\Ext^1_{\overline{R}}(T\otimes_R\overline{R},M)=0$. Then $\Ext_R^1(T,M)=0$. Thus $M$ is nonnil-FP-injective over $R$.
\end{proof}

Obviously, any  FP-injective module is nonnil-FP-injective, and any  injective module is nonnil-injective. However, the converses characterize integral domains.
\begin{theorem}\label{asfap-int}
Let $R$ be  a $\phi$-ring.  Then the following  assertions are equivalenet:
\begin{enumerate}
    \item  $R$ is an integral domain;
    \item   any  nonnil-injective module is injective;
       \item   any  nonnil-FP-injective module is FP-injective.
\end{enumerate}
\end{theorem}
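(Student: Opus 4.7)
The plan is to prove $(1)\Rightarrow (2)$ and $(1)\Rightarrow (3)$ by Baer-type criteria, and to dispatch both converses together through one contrapositive construction.

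For $(1)\Rightarrow(2)$ and $(1)\Rightarrow(3)$: if $R$ is a domain then $\Nil(R)=0$, so every nonzero ideal of $R$ is nonnil. For a nonnil-injective (resp.\ nonnil-FP-injective) module $M$, this immediately yields $\Ext^1_R(R/I,M)=0$ for every (resp.\ every finitely generated) ideal $I$ of $R$, the case $I=0$ being trivial. Baer's criterion then delivers injectivity directly, and FP-injectivity follows from its finitely generated variant (proved by the usual induction on the rank of the ambient finitely generated free module in a finite presentation, which reduces the extension problem to the rank-one case $\Ext^1_R(R/I,M)=0$).

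For both converses I argue by contrapositive. Assume $\Nil(R)\neq 0$ and pick $n\in\Nil(R)\setminus\{0\}$. Set $\overline R=R/\Nil(R)$, and take $E$ to be any injective cogenerator of $\overline R$-$\mathrm{Mod}$ (for instance $E=\Hom_{\mathbb Z}(\overline R,\mathbb Q/\mathbb Z)$). By Proposition \ref{R-nil}, $E$ is nonnil-injective, hence a fortiori nonnil-FP-injective, over $R$. The plan is to exhibit a nonzero class in $\Ext^1_R(R/(n),E)$; since $R/(n)$ is finitely presented, this single obstruction will simultaneously prevent $E$ from being injective or FP-injective over $R$, contradicting both (2) and (3). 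Applying $\Hom_R(-,E)$ to $0\to(n)\to R\to R/(n)\to 0$ and using $\Nil(R)\cdot E=0$, the restriction map $E=\Hom_R(R,E)\to\Hom_R((n),E)$ sends $e\mapsto(rn\mapsto(rn)e)=0$ and is therefore zero; for the same reason every $R$-linear map $(n)\to E$ kills $\Nil(R)(n)$, giving
\[
\Ext^1_R(R/(n),E)\;\cong\;\Hom_R((n),E)\;\cong\;\Hom_{\overline R}\bigl((n)/\Nil(R)(n),\,E\bigr).
\]

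The crucial and hardest step is a small Nakayama-style observation that $(n)/\Nil(R)(n)\neq 0$. Indeed, if $n=rn$ for some $r\in\Nil(R)$, then $(1-r)n=0$; but $1-r$ is a unit (as $r$ is nilpotent), forcing $n=0$, a contradiction. Once this is in hand, the cogenerator property of $E$ guarantees $\Hom_{\overline R}((n)/\Nil(R)(n),E)\neq 0$, completing both converses. The main subtlety to locate is the pairing of this Nakayama trick with the right choice of $E$: a naive candidate such as the injective envelope $E(\overline R)$ is $\overline R$-torsion-free and can support no nonzero map out of $(n)/\Nil(R)(n)$ when $R$ fails to be strongly $\phi$, whereas an injective cogenerator detects every nonzero $\overline R$-module and thereby bypasses the zero-divisor complications of a general $\phi$-ring.
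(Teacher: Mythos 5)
Your converse direction is correct and takes a genuinely different route from the paper's: both arguments start from the same test module $E=\Hom_{\mathbb Z}(R/\Nil(R),\mathbb Q/\mathbb Z)$ and Proposition \ref{R-nil}, but the paper then deduces from the injectivity of this character module that $R/\Nil(R)$ is a flat $R$-module and kills finitely generated nilpotent ideals via $\Tor_1^R(R/K,R/\Nil(R))=K/K\Nil(R)$ and Nakayama, whereas you exhibit the explicit nonvanishing class $\Ext_R^1(R/(n),E)\cong\Hom_{\overline R}\bigl((n)/\Nil(R)(n),E\bigr)\neq 0$, using that $1-r$ is a unit for nilpotent $r$ and that $E$ is a cogenerator. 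Since $R/(n)$ is finitely presented, this single computation refutes (2) and (3) at once and avoids the flatness detour; that part of your argument is sound.

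There is, however, a genuine gap in your $(1)\Rightarrow(3)$. You reduce nonnil-FP-injectivity of $M$ to the condition $\Ext_R^1(R/I,M)=0$ for all finitely generated ideals $I$, and then invoke a ``finitely generated variant'' of Baer's criterion, to be proved by induction on the rank of the free module in a finite presentation. That lemma is not available: testing against cyclic modules $R/I$ only gives what is usually called $(1,n)$-injectivity (solvability of finite systems of equations in one unknown), which is strictly weaker than FP-injectivity over non-coherent rings, and your proposed induction fails at precisely this point. If $K\subseteq R^m$ is finitely generated and you project onto a coordinate, the intermediate kernel $K\cap R^{m-1}$ need not be finitely generated when $R$ is not coherent, so $R^{m-1}/(K\cap R^{m-1})$ is not finitely presented and the induction hypothesis cannot be applied. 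The repair is to use the full hypothesis rather than only the cyclic case: given a finitely presented $N$ over the domain $R$ with quotient field $Q$, choose $x_1,\dots,x_d\in N$ whose images form a basis of $N\otimes_RQ$; then $F'=\sum_{i}Rx_i\cong R^d$ is free, and $N/F'$ is finitely presented and torsion, hence $\phi$-torsion, so the exact sequence $\Ext_R^1(N/F',M)\rightarrow\Ext_R^1(N,M)\rightarrow\Ext_R^1(F',M)$ forces $\Ext_R^1(N,M)=0$. (Your $(1)\Rightarrow(2)$ is fine as written, since there Baer's criterion applies verbatim.)
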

\begin{proof} $(1)\Rightarrow(2)$ and $(1)\Rightarrow(3)$ : Trivial.

 $(2)\Rightarrow(1)$: By \cite[Theorem 3.1.6]{EJ00}, $\Hom_{\mathbb{Z}}(R/\Nil(R),\mathbb{Q}/\mathbb{Z})$ is an injective $R/\Nil(R)$-module. Thus by Proposition \ref{R-nil}, $\Hom_{\mathbb{Z}}(R/\Nil(R),\mathbb{Q}/\mathbb{Z})$  is a nonnil-injective $R$-module, and thus an injective $R$-module. By \cite[Theorem 3.2.10]{EJ00}, $R/\Nil(R)$ is a flat $R$-module.  Let $K$ be a finitely generated nilpotent ideal, then $K\subseteq \Nil(R)\subseteq Rad(R)$. Thus $K/K\Nil(R)=\frac{K\cap \Nil(R)}{K\Nil(R)}= \Tor^R_1(R/K,R/\Nil(R))=0$. It follows from the Nakayama Lemma that $K=0$. Thus  $\Nil(R)=0$, and then $R$ is an integral domain.

  $(3)\Rightarrow(1)$: Similar with  $(2)\Rightarrow(1)$.
\end{proof}

Recall from \cite{ZWT13} that a $\phi$-ring $R$ is said to be $\phi$-von Neumann if the Krull dimension of $R$ is $0$.  It is well known that a $\phi$-ring $R$ is $\phi$-von Neumann if and only if $R/\Nil(R)$ is a von Neumann ring, if and only if any $R$-module is  $\phi$-flat (see \cite[Theorem 4.1]{ZWT13}).
\begin{theorem}\label{asfap-vn}
Let $R$ be a $\phi$-ring.  Then the following  assertions are equivalenet:
\begin{enumerate}
    \item  $R$ is a $\phi$-von Neumann regular ring;
    \item  $R/\Nil(R)$ is a field;
      \item  any non-nilpotent element  in $R$ is invertible.
    \item   any $R$-module is  $\phi$-flat;
       \item   any $R$-module is  nonnil-FP-injective.
          \item   any $R$-module is  nonnil-injective.
\end{enumerate}
\end{theorem}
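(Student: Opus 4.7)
The plan is to build a cycle of implications on top of two facts already cited just before the theorem from \cite[Theorem 4.1]{ZWT13}: namely that (1) is equivalent to (4) and that (1) is equivalent to $R/\Nil(R)$ being a von Neumann regular ring. Together with (1) $\Leftrightarrow$ (2), (2) $\Leftrightarrow$ (3), and the chain (2) $\Rightarrow$ (6) $\Rightarrow$ (5) $\Rightarrow$ (4), this closes the loop.

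For (1) $\Leftrightarrow$ (2), since $R\in\mathcal{H}$, the ideal $\Nil(R)$ is a divided prime, so $R/\Nil(R)$ is an integral domain; combined with the cited equivalence, (1) is equivalent to $R/\Nil(R)$ being both a von Neumann regular ring and an integral domain, which happens precisely when $R/\Nil(R)$ is a field. For (2) $\Leftrightarrow$ (3), in a commutative ring $\Nil(R)$ is contained in every maximal ideal, so an element $x\in R$ is a unit if and only if its image in $R/\Nil(R)$ is a unit (if $xy=1+n$ with $n$ nilpotent, then $xy$, and hence $x$, is a unit). Thus every nonzero class of $R/\Nil(R)$ being invertible translates exactly to every non-nilpotent element of $R$ being invertible.

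The central step is (2) $\Rightarrow$ (6). The observation is that (2) forces the $\phi$-torsion theory to be trivial. Indeed, by Lemma \ref{nonnil-regu} a nonnil ideal $I$ of $R$ is one whose image in $R/\Nil(R)$ is nonzero; under (2) that image must equal the whole field, so $I+\Nil(R)=R$, and any $i\in I$ with $1-i\in\Nil(R)$ is a unit (being $1$ minus a nilpotent), forcing $I=R$. Hence the only $\phi$-torsion $R$-module is $0$, so $\Ext_R^1(T,M)=0$ holds vacuously for every $\phi$-torsion module $T$ and every $R$-module $M$, which gives (6). The implication (6) $\Rightarrow$ (5) is immediate from the definitions, and (5) $\Rightarrow$ (4) follows from Proposition \ref{flat-FP-injective}: for any $R$-module $M$ and any injective cogenerator $E$, the character module $\Hom_R(M,E)$ is nonnil-FP-injective by hypothesis, whence $M$ is $\phi$-flat.

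The only real content of the argument is the recognition that (2) eliminates every proper nonnil ideal and therefore collapses conditions (4), (5), (6) to vacuous statements about the zero torsion class; everything else is either a formal consequence of Proposition \ref{flat-FP-injective} or a direct citation of \cite[Theorem 4.1]{ZWT13}. I do not foresee any serious technical obstacle.
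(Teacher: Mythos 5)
Your proposal is correct, and its core observation coincides with the paper's: condition (2)/(3) forces every nonnil ideal to equal $R$, so the only $\phi$-torsion module is $0$ and (5), (6) become vacuous. The differences are in how the cycle is closed. For $(1)\Rightarrow(2)$ the paper argues that $R/\Nil(R)$ is a $0$-dimensional domain, hence a field, while you use the cited equivalence with $R/\Nil(R)$ von Neumann regular plus the fact that a von Neumann regular domain is a field; both are fine. The genuine divergence is the return arrow from (5): the paper proves $(5)\Rightarrow(1)$ by noting that $\Ext_R^1(R/I,M)=0$ for all $M$ makes $R/I$ projective, hence $I$ idempotent, hence generated by an idempotent by a result of Fuchs--Salce, and then invokes the characterization of $\phi$-von Neumann regular rings. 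You instead prove $(5)\Rightarrow(4)$ directly by applying Proposition \ref{flat-FP-injective} to character modules $\Hom_R(M,E)$ with $E$ an injective cogenerator. Your route is shorter and reuses machinery already established in the paper, avoiding the external citation on idempotently generated ideals; the paper's route has the mild advantage of landing directly on the ring-theoretic condition (1) without passing through (4). Either way the equivalences close, so the argument is complete.
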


\begin{proof}
$(1)\Leftrightarrow(4)$: See \cite[Theorem 4.1]{ZWT13}.

$(1)\Rightarrow(2)$: Since $\Nil(R)$ is a prime ideal of $R$, $R/\Nil(R)$ is a $0$-dimensional domain, thus a field by \cite[Theorem 3.1]{H88}.

$(2)\Rightarrow(3)$: Let $a$ be a non-nilpotent element  in $R$. Since $R/\Nil(R)$ is a field, there exists $b\in R$ such that $1-ab\in \Nil(R)$. That is, $(1-ab)^n=0$ for some $n$. It is easy to verify that $a$ is invertible.

$(3)\Rightarrow(2) \Rightarrow(1)$: Trivial.

$(3)\Rightarrow(5)$:  It follows from (3) that  the only  nonnil ideal of $R$ is $R$ itself. Let $T$ be a finitely presented $\phi$-torsion module. Then $T=\phi$-$tor(T)=\{x\in T|Ix=0$ for some nonnil ideal $I$ of  $R \}=0$. It follows that $\Ext_R^1(T,M)=0$. Consequently, $M$ is nonnil-FP-injective.

$(5)\Rightarrow(1)$: Let $I$ be a finitely generated nonnil ideal of $R$. Since for any $R$-module $M$, $\Ext_R^1(R/I,M)=0$ by $(5)$, then $R/I$ is projective. Thus $I$ is an idempotent ideal of $R$. By \cite[Proposition 1.10]{FS01}, $I$ is generated by an idempotent $e\in R$. Thus $R$ is a $\phi$-von Neumann regular ring by \cite[Theorem 4.1]{ZWT13}.

$(3)\Rightarrow(6)$ and $(6)\Rightarrow(5)$: Obvious.
\end{proof}

Recall from \cite{A03} that a $\phi$-ring $R$ is called  \emph{nonnil-Noetherian} if any nonnil ideal of $R$ is finitely generated.
\begin{proposition}\label{asfap-nn}
Let $R$ be a $\phi$-ring.  Then $R$ is nonnil-Noetherian if and only if any nonnil-FP-injective module is nonnil-injective.
\end{proposition}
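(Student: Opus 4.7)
The plan is to prove both implications by reducing to two facts already in hand: the characterization noted just after the definition, that $M$ is nonnil-injective if and only if $\Ext^1_R(R/I,M)=0$ for every nonnil ideal $I$; and Yang's theorem recalled in the introduction, which says that $R$ is nonnil-Noetherian if and only if every direct sum of nonnil-injective $R$-modules is again nonnil-injective.

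For the forward direction, I would assume $R$ is nonnil-Noetherian and let $M$ be nonnil-FP-injective. For any nonnil ideal $I$, finite generation of $I$ makes $R/I$ a finitely presented module, and $R/I$ is $\phi$-torsion since $I\cdot (R/I)=0$ with $I$ nonnil. Hence $\Ext^1_R(R/I,M)=0$ by the nonnil-FP-injective hypothesis, and the characterization above promotes $M$ to nonnil-injective. This step is essentially bookkeeping.

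The harder direction is the converse. Assuming every nonnil-FP-injective module is nonnil-injective, I would verify Yang's criterion by showing that any direct sum $\bigoplus_{i\in \Lambda}M_i$ of nonnil-injective modules is still nonnil-injective. Since each $M_i$ is trivially nonnil-FP-injective, it suffices to check that the class of nonnil-FP-injective modules is closed under direct sums: for any finitely presented $\phi$-torsion $T$, choose a presentation $0\to K\to R^n\to T\to 0$ with $K$ finitely generated, and observe that both $\Hom_R(R^n,-)$ and $\Hom_R(K,-)$ commute with direct sums (the latter because any homomorphism from a finitely generated module into $\bigoplus_i M_i$ has image in a finite sub-sum), so the same follows for $\Ext^1_R(T,-)$ from the associated long exact $\Hom$-sequence. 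Thus $\bigoplus_i M_i$ is nonnil-FP-injective, the standing hypothesis then promotes it to nonnil-injective, and Yang's theorem delivers nonnil-Noetherianness of $R$. The sole technical point is the commutation of $\Ext^1_R(T,-)$ with direct sums for finitely presented $T$; this is standard, so I do not anticipate a real obstacle beyond assembling the pieces in the correct order.
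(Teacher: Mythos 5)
Your proposal is correct and follows essentially the same route as the paper: the forward direction is the same bookkeeping via the Baer-type criterion, and the converse likewise invokes Yang's direct-sum characterization of nonnil-Noetherian rings together with the closure of nonnil-FP-injective modules under direct sums (a fact the paper merely asserts earlier and which you justify by the standard finitely-presented $\Ext^1$ argument).
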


\begin{proof} Suppose $R$ is a nonnil-Noetherian ring.  Let $I$ be a nonnil ideal of $R$ and $M$  a nonnil-FP-injective module. Then $I$ is  finitely generated, and thus $R/I$ is finitely presented $\phi$-torsion. It follows that  $\Ext^1_R(R/I,M)$. Consequently, $M$ is nonnil-injective by \cite[Theorem 1.7]{ZZ19}. On the other hand, since the class of nonnil-FP-injective modules is closed under direct sums, $R$ is a nonnil-Noetherian ring by \cite[Theorem 1.9]{Y06}
\end{proof}

Recall from \cite{aa16} that a $\phi$-ring $R$ is called \emph{nonnil-coherent} if any finitely generated nonnil ideal of $R$ is finitely presented.   A $\phi$-ring $R$ is nonnil-coherent if and only if any direct product of $\phi$-flat modules is $\phi$-flat, if and only if $R^I$ is $\phi$-flat for any indexing set $I$ (see \cite[Theorem 2.4]{aa16}). Now we give a new characterization of nonnil-coherent rings utilizing the  preenveloping properties of $\phi$-flat modules.
\begin{proposition}\label{asfap-nc}
Let $R$ be a $\phi$-ring.  Then  $R$ is  nonnil-coherent if an only if the class of $\phi$-flat modules is preenveloping.
\end{proposition}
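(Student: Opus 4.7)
The plan is to prove the two implications separately: the ``if'' direction is a short splitting argument, while the ``only if'' direction rests on closure under pure submodules together with a Löwenheim--Skolem-style cardinality bound.

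For the ``if'' direction, I would assume that the class $\mathcal{F}_{\phi}$ of $\phi$-flat modules is preenveloping. Given an arbitrary family $\{F_i\}_{i\in I}$ of $\phi$-flat modules, let $f\colon\prod_{i\in I}F_i\to G$ be a $\phi$-flat preenvelope. Each projection $\pi_j\colon\prod_{i\in I}F_i\to F_j$ maps into the $\phi$-flat module $F_j$, so by the preenveloping property it factors as $g_j\circ f$ for some $g_j\colon G\to F_j$. Assembling the $g_j$ yields $g\colon G\to\prod_{i\in I}F_i$ with $g\circ f=\mathrm{id}$, so $\prod_{i\in I}F_i$ is a direct summand of $G$. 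Because $\Tor_1^R(R/I,-)$ commutes with direct summands, $\mathcal{F}_{\phi}$ is closed under direct summands, whence $\prod_{i\in I}F_i$ is $\phi$-flat. Then \cite[Theorem 2.4]{aa16} forces $R$ to be nonnil-coherent.

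For the ``only if'' direction, I would assume $R$ is nonnil-coherent, so that $\mathcal{F}_{\phi}$ is closed under direct products by \cite[Theorem 2.4]{aa16}. I would first verify that $\mathcal{F}_{\phi}$ is closed under pure submodules: if $N\subseteq F$ is pure with $F\in\mathcal{F}_{\phi}$, then for each finitely generated nonnil ideal $I$ the induced map $\Tor_1^R(R/I,N)\to\Tor_1^R(R/I,F)=0$ is injective, forcing $\Tor_1^R(R/I,N)=0$. To produce preenvelopes I would then carry out the standard set-size reduction: fix an $R$-module $M$ and a cardinal $\kappa\geq\max\{|M|,|R|,\aleph_0\}$; for every morphism $f\colon M\to F$ with $F\in\mathcal{F}_{\phi}$, one can find a pure submodule $F'\subseteq F$ of cardinality at most $\kappa$ containing $f(M)$, and such an $F'$ lies in $\mathcal{F}_{\phi}$ by the preceding observation. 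Since the isomorphism classes of such $F'$ form a set, enumerating the resulting factorizations $\{f_\alpha\colon M\to F'_\alpha\}_\alpha$ and combining them into a single map $M\to\prod_\alpha F'_\alpha$ produces a morphism into a member of $\mathcal{F}_{\phi}$ (by closure under products) through which every $f$ factors; this is the required $\phi$-flat preenvelope.

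The main obstacle is precisely the cardinality step in the second direction: producing, for each morphism $M\to F$, a \emph{pure} submodule of $F$ of prescribed cardinality containing the image of $M$. This is the Löwenheim--Skolem-style construction on which the whole preenveloping argument hinges, and is standard in relative homological algebra (compare the general methods of Enochs and Jenda). Everything else reduces to unraveling the definitions and invoking the characterization of nonnil-coherence already cited in the paper.
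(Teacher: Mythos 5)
Your proof is correct and follows essentially the same route as the paper: the ``if'' direction is the identical splitting argument showing products of $\phi$-flat modules are $\phi$-flat, and the ``only if'' direction combines closure under direct products (via nonnil-coherence) with closure under pure submodules and the cardinality bound on pure submodules. The only difference is presentational: where you sketch the L\"owenheim--Skolem-style reduction explicitly, the paper simply cites \cite[Lemma 5.3.12, Corollary 6.2.2]{EJ00} for that machinery.
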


\begin{proof}

Suppose $R$ is a nonnil-coherent ring. By \cite[Theorem 2.4]{aa16}, the class of $\phi$-flat modules is closed under direct products. Note that any pure submodule of a $\phi$-flat module is $\phi$-flat. Thus  the class of $\phi$-flat modules is preenveloping by \cite[Lemma 5.3.12, Corollary 6.2.2]{EJ00}. On the other hand, let $\{F_i\}_{i\in I}$ be a family of $\phi$-flat modules. Let $\prod_{i\in I} F_i\rightarrow F$ is a $\phi$-flat preenvelope. Then there is a factorization $\prod_{i\in I} F_i\rightarrow F\rightarrow F_i$ for each $i\in I$. Consequently, the natural composition $\prod_{i\in I} F_i\rightarrow F\rightarrow \prod_{i\in I} F_i$ is an identity. Thus $\prod_{i\in I} F_i$ is a direct summand of  $F$ and then $\prod_{i\in I} F_i$ is $\phi$-flat. It follows from \cite[Theorem 2.4]{aa16} that $R$ is  nonnil-coherent.

\end{proof}

The following Corollary follows from  Theorem \ref{asfap} and \cite[Corollary 6.3.5]{EJ00}.
\begin{corollary}\label{208}
Let $R$ be a  nonnil-coherent ring. If the class of $\phi$-flat modules  is closed under inverse limits, then the class of $\phi$-flat modules is enveloping.
\end{corollary}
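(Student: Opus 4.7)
The plan is to extract this corollary directly from the two inputs already assembled, namely Proposition \ref{asfap-nc} (the nonnil-coherent characterization via preenvelopes) and the general categorical fact from \cite{EJ00}. In other words, the claim should be essentially a syllogism: preenveloping $+$ closure under inverse limits $\Rightarrow$ enveloping.

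First I would observe that, because $R$ is assumed nonnil-coherent, Proposition \ref{asfap-nc} applies and yields that the class $\mathcal{F}_\phi$ of $\phi$-flat $R$-modules is preenveloping. (The reference in the statement to ``Theorem \ref{asfap}'' appears to be intended as a reference to Proposition \ref{asfap-nc}; either way, what is being used is that nonnil-coherence supplies $\phi$-flat preenvelopes for every module.) At this point every $R$-module $M$ admits a $\phi$-flat preenvelope $M \to F$, which is the raw material we need to upgrade to an envelope.

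Next I would invoke the hypothesis: $\mathcal{F}_\phi$ is closed under inverse limits. The standard argument, which is exactly the content of \cite[Corollary 6.3.5]{EJ00}, is to apply Wakamatsu-type reasoning: given a preenvelope $\varphi\colon M\to F$, one builds an envelope by iteratively taking inverse limits of factorizations $\varphi = \psi\circ\varphi$ in order to cut down $F$ to a minimal object, and closure of $\mathcal{F}_\phi$ under inverse limits guarantees the limit lands back inside $\mathcal{F}_\phi$. Since this abstract mechanism is already codified in \cite[Corollary 6.3.5]{EJ00}, I would simply cite it rather than reproduce the set-theoretic construction of the limit system.

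Putting the two together: every module $M$ has a $\phi$-flat preenvelope, the class $\mathcal{F}_\phi$ is closed under inverse limits by hypothesis, and hence by \cite[Corollary 6.3.5]{EJ00} every module $M$ has a $\phi$-flat envelope. The only conceptual obstacle would be if one insisted on proving the Enochs--Jenda reduction from preenvelopes to envelopes from scratch, but since that is quoted as a hypothesis of the corollary, there is essentially nothing to check beyond matching the setup: verifying that $\mathcal{F}_\phi$ is closed under the operations required for the cited result (it is, via Proposition \ref{asfap-nc} and the stated inverse-limit hypothesis). Hence the proof collapses to one sentence combining Proposition \ref{asfap-nc} with \cite[Corollary 6.3.5]{EJ00}.
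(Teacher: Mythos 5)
Your proof is correct and is exactly the argument the paper intends: the paper gives no written proof beyond asserting that the corollary follows from the preenveloping property (which, as you correctly note, comes from Proposition \ref{asfap-nc} via nonnil-coherence, the reference to Theorem \ref{asfap} being a typo) together with \cite[Corollary 6.3.5]{EJ00}. Nothing further is needed.
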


\section{$\phi$-Dedekind rings and $\phi$-\Prufer\ rings }

Recall that an $R$-module $E$ is said to be  \emph{divisible} if $sM=M$ for any regular element $s\in R$, and an $R$-module $M$ is said to be  \emph{$h$-divisible} provided that $M$ is a quotient of an injective module. Evidently, any injective module is $h$-divisible and any $h$-divisible module is divisible. It is well known that an integral domain $R$ is a Dedekind domain if and only if any $h$-divisible module is injective, if and only if any divisible module is injective (see \cite[Theorem 5.2.15]{fk16} for example).
\begin{definition} Let $R$ be an $\NP$-ring. An $R$-module $E$ is called  \emph{nonnil-divisible} provided that  for any $m\in E$ and any non-nilpotent element $a\in R$, there exists  $x\in E$ such that $ax=m$.
\end{definition}

\begin{lemma}\label{nonnil-div-ext}
Let $R$ be an $\NP$-ring and $E$ an $R$-module. Consider the following statements:
 \begin{enumerate}
   \item $E$ is nonnil-divisible;
   \item $E$ is divisible;
  \item $\Ext_R^1(R/\langle a\rangle,E)=0$ for any $a\not\in\Nil(R)$.
 \end{enumerate}
Then we have $(1)\Rightarrow (2)$ and $(1)\Rightarrow (3)$. Moreover, if $R$ is a $\ZN$-ring, all  statements are equivalent.
\end{lemma}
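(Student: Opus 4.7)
The plan is to establish the two unconditional implications $(1)\Rightarrow(2)$ and $(1)\Rightarrow(3)$ first, and then close the loop to get $(2)\Rightarrow(1)$ and $(3)\Rightarrow(1)$ under the extra $\ZN$ hypothesis.

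For $(1)\Rightarrow(2)$, I would begin with the elementary observation that every regular element $s\in R$ is automatically non-nilpotent: if $s^{n}=0$ with $n$ minimal, then $s\cdot s^{n-1}=0$ forces $s^{n-1}=0$ by regularity, iterating down to $s=0$ and contradicting regularity. Hence nonnil-divisibility directly implies divisibility.

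For $(1)\Rightarrow(3)$, I would run the Baer-criterion argument on the principal nonnil ideal $\langle a\rangle$. The short exact sequence $0\to\langle a\rangle\to R\to R/\langle a\rangle\to 0$ gives the exact sequence
\[
\Hom_R(R,E)\longrightarrow\Hom_R(\langle a\rangle,E)\longrightarrow\Ext_R^1(R/\langle a\rangle,E)\longrightarrow 0,
\]
so it suffices to lift every $f\colon\langle a\rangle\to E$ to $R\to E$. Setting $m:=f(a)$ and invoking (1) to choose $x\in E$ with $ax=m$, the map $g\colon R\to E$, $g(r)=rx$, extends $f$ because $g(ra)=rax=rm=rf(a)=f(ra)$.

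Now assume $R$ is $\ZN$, so that $\Z(R)=\Nil(R)$; equivalently, an element of $R$ is non-nilpotent if and only if it is regular. Then $(2)\Rightarrow(1)$ is immediate from this coincidence. For $(3)\Rightarrow(1)$, given a non-nilpotent (hence regular) $a\in R$ and an arbitrary $m\in E$, I would define $f\colon\langle a\rangle\to E$ by $f(ra):=rm$. The only subtle point of the whole lemma lies here: this assignment is well-defined precisely because $\ann_R(a)=0$ under the $\ZN$ hypothesis, so $ra=r'a$ forces $r=r'$. The vanishing $\Ext_R^1(R/\langle a\rangle,E)=0$ then produces an extension $g\colon R\to E$ of $f$, and $x:=g(1)$ satisfies $ax=g(a)=f(a)=m$, as required. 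I expect this well-definedness check to be the only genuine obstacle, and it is exactly the step that breaks without the $\ZN$ assumption; everything else is standard diagram chasing.
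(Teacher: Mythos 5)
Your proof is correct and follows essentially the same route as the paper's: the Baer-type extension argument for $(1)\Rightarrow(3)$, and the map $f(ra)=rm$ (well-defined because $\ann_R(a)=0$ under the $\ZN$ hypothesis) for $(3)\Rightarrow(1)$. The only difference is that you spell out the two points the paper leaves implicit --- that regular elements are automatically non-nilpotent, and the well-definedness of $f$ --- which is a welcome clarification but not a different argument.
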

\begin{proof} $(1)\Rightarrow (2)$ and $(2)\Rightarrow (1)$ for $\ZN$-rings: Trivial.

$(1)\Rightarrow (3)$: Let $a$ be a non-nilpotent element (then regular) in $R$ and $f: \langle a\rangle \rightarrow E$ be an $R$-homomorphism. Then there exists an element $x\in E$ such that $f(a)=ax$ since $E$ is nonnil-divisible. Set $g(r)=rx$ for any $r\in R$. Then $g$ is an extension of $f$ to $R$. Thus $\Ext_R^1(R/\langle a\rangle,E)=0$.

$(1)\Rightarrow (3)$ for $\ZN$-rings: Let $a$ be a non-nilpotent element in $R$ and  $m$  an element in $E$. Set $f(ra)=rm$. Then $f$ is a well-defined $R$-homomorphism from $\langle a\rangle$ to $E$. Since $\Ext_R^1(R/\langle a\rangle,E)=0$, there exists an $R$-homomorphism  $g: R\rightarrow E$ such that $g|_{\langle a\rangle}=f$. Let $x=g(1)$, then $m=f(a)=g(a)=ag(1)=ax$. Thus $E$ is  nonnil-divisible.
\end{proof}

The following result is an easy corollary of Lemma \ref{nonnil-div-ext}.
\begin{corollary}\label{nonnil-div}
Let $R$ be  a $\ZN$-ring and $E$ a nonnil-FP-injective $R$-module.  Then  $E$ is a nonnil-divisible  $R$-module.
\end{corollary}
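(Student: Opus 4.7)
The plan is to apply Lemma \ref{nonnil-div-ext}, which for $\ZN$-rings reduces nonnil-divisibility to the vanishing condition $\Ext_R^1(R/\langle a\rangle, E) = 0$ for every non-nilpotent $a\in R$. So the goal is to verify this Ext vanishing using the hypothesis that $E$ is nonnil-FP-injective.

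First I would fix a non-nilpotent element $a\in R$ and analyze the cyclic module $R/\langle a\rangle$. The module $R/\langle a\rangle$ is obviously $\phi$-torsion, since the nonnil ideal $\langle a\rangle$ annihilates it. The main point to check is that $R/\langle a\rangle$ is actually finitely presented. Since $R$ is a $\ZN$-ring, every non-nilpotent element is a non-zero-divisor; in particular $\operatorname{ann}_R(a)=0$. Thus the multiplication map $a\cdot : R\to R$ is injective, yielding a short exact sequence
\[
0\longrightarrow R\xrightarrow{\ a\cdot\ } R\longrightarrow R/\langle a\rangle\longrightarrow 0,
\]
which exhibits $R/\langle a\rangle$ as finitely presented.

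Since $R/\langle a\rangle$ is a finitely presented $\phi$-torsion $R$-module and $E$ is nonnil-FP-injective, the definition immediately gives $\Ext_R^1(R/\langle a\rangle, E)=0$. As $a$ was an arbitrary non-nilpotent element, condition (3) of Lemma \ref{nonnil-div-ext} holds, and the $\ZN$ hypothesis lets us conclude condition (1), namely that $E$ is nonnil-divisible.

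The only step with any content is recognizing that the $\ZN$ assumption is exactly what is needed to make $R/\langle a\rangle$ finitely presented (so that the \emph{FP}-injectivity, rather than full injectivity, suffices). Everything else is a direct application of the definitions and Lemma \ref{nonnil-div-ext}.
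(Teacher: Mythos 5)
Your proof is correct and is exactly the argument the paper intends: the paper offers no written proof beyond calling the statement ``an easy corollary of Lemma \ref{nonnil-div-ext},'' and your verification that $R/\langle a\rangle$ is a finitely presented $\phi$-torsion module, so that nonnil-FP-injectivity yields condition (3) of that lemma, is the right way to fill in the details. One small correction to your closing remark: $R/\langle a\rangle$ is finitely presented for \emph{any} ring, since the presentation $R\xrightarrow{a\cdot}R\to R/\langle a\rangle\to 0$ does not require the first map to be injective; the $\ZN$ hypothesis is actually consumed inside Lemma \ref{nonnil-div-ext}, where it is needed to make the map $ra\mapsto rm$ well defined in the implication $(3)\Rightarrow(1)$.
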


\begin{lemma}\label{local nonnil-div}
Let $R$ be an $\NP$-ring and $E$ a nonnil-divisible  $R$-module. Then $E_{\p}$ is a nonnil-divisible  $R_{\p}$-module for any  prime ideal $\p$ of $R$.
\end{lemma}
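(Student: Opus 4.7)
My plan is to unravel the definition of nonnil-divisibility for $R_\p$-modules and reduce it directly to the nonnil-divisibility of $E$ over $R$. That is, I start with an arbitrary element $m/s \in E_\p$ (where $m \in E$ and $s \in R \setminus \p$) and an arbitrary non-nilpotent element $a/t \in R_\p$ (where $a \in R$ and $t \in R \setminus \p$), and produce an explicit $x/u \in E_\p$ with $(a/t)(x/u) = m/s$.

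The key preparatory step is the observation that if $a/t$ is non-nilpotent in $R_\p$, then $a$ itself is non-nilpotent in $R$. This is the contrapositive of a trivial fact: if $a^n = 0$ in $R$, then $(a/t)^n = a^n/t^n = 0$ in $R_\p$. With this in hand, $a \in R$ is a non-nilpotent element, so the nonnil-divisibility of $E$ over $R$ applies: there exists $y \in E$ with $ay = m$.

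It then remains to package $y$ into a suitable fraction. The natural candidate is $x/u := (ty)/s$; a direct computation gives
\[
\frac{a}{t} \cdot \frac{ty}{s} \;=\; \frac{aty}{ts} \;=\; \frac{ay}{s} \;=\; \frac{m}{s}
\]
in $E_\p$, where the middle equality uses that $t$ is a unit in the localization (equivalently, $st \cdot aty = st \cdot ay$ already holds in $E$). This verifies the defining property and completes the argument.

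There is no real obstacle here; the only point requiring mild attention is the non-nilpotence transfer in the key step, to ensure we are entitled to invoke the hypothesis on $E$. The rest is a routine fraction manipulation, so the lemma follows in a few lines once these two ingredients are combined.
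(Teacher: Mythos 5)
Your proposal is correct and follows essentially the same route as the paper: both reduce to a single application of nonnil-divisibility of $E$ over $R$ after observing that the numerator of a non-nilpotent fraction in $R_{\p}$ must itself be non-nilpotent in $R$. The only cosmetic difference is that the paper divides $tm$ by the non-nilpotent product $sr$ while you divide $m$ by $a$ directly and adjust the fraction afterwards; both computations are routine and valid.
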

\begin{proof}  Suppose $E$ is a nonnil-divisible  $R$-module. Let $\frac{m}{s}$ be an element in $ E_{\p}$ and $\frac{r}{t}$ a non-nilpotent element in $ R_{\p}$. Then $s,\ t$ and $r$ are non-nilpotent elements in $R$. Thus there exists $y\in E$ such that $tm=sry$ in $R$. Then $\frac{m}{s}=\frac{r}{t}\frac{y}{1}$. It follows that $E_{\p}$ is a nonnil-divisible  $R_{\p}$-module.
\end{proof}

Recall from \cite{FA04} that a $\phi$-ring $R$ is called a \emph{$\phi$-chained ring} if every $x\in R_{\Nil(R)}-\phi(R)$, we have $x^{-1}\in \phi(R)$, equivalently, if for any $a,b\in R-\Nil(R)$, either $a|b$ or $b|a$ in $R$. Moreover,  a $\phi$-ring $R$ is said to be a \emph{discrete $\phi$-chained ring} if $R$ is a $\phi$-chained ring with at most one nonnil prime ideal and every nonnil ideal of $R$ is principal (see \cite{FA05}).

\begin{proposition}\label{div-nonnil}
Let $R$ be a  discrete $\phi$-chained ring and $E$ a nonnil-divisible $R$-module. Then  $E$ is a nonnil-injective $R$-module.
\end{proposition}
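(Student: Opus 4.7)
The plan is to reduce nonnil-injectivity to a Baer-style test on nonnil ideals, and then use the very restricted shape of nonnil ideals in a discrete $\phi$-chained ring to invoke Lemma \ref{nonnil-div-ext} directly.

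First, I would recall the criterion noted after the definition of nonnil-injective modules (from \cite[Theorem 1.7]{ZZ19}): an $R$-module $E$ is nonnil-injective if and only if $\Ext_R^1(R/I,E)=0$ for every nonnil ideal $I$ of $R$. So the task reduces to checking this vanishing for every nonnil ideal of $R$.

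Next I would use the assumption that $R$ is a discrete $\phi$-chained ring, which by the definition taken from \cite{FA05} means that every nonnil ideal of $R$ is principal. Thus any nonnil ideal $I$ has the form $I=\langle a\rangle$ for some $a\in R$, and $a$ must be non-nilpotent (otherwise $I\subseteq\Nil(R)$, contradicting that $I$ is nonnil). Consequently, the required vanishing takes the form $\Ext_R^1(R/\langle a\rangle,E)=0$ for each $a\notin\Nil(R)$.

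Finally, since $E$ is nonnil-divisible by hypothesis, the implication $(1)\Rightarrow(3)$ of Lemma \ref{nonnil-div-ext} (which holds for any $\NP$-ring, not just a $\ZN$-ring) gives exactly $\Ext_R^1(R/\langle a\rangle,E)=0$ for every $a\notin\Nil(R)$. Combining this with the Baer-type criterion above yields $\Ext_R^1(R/I,E)=0$ for every nonnil ideal $I$, and hence $E$ is nonnil-injective. There is essentially no obstacle here; the only thing worth being careful about is confirming that in the principal nonnil ideal $\langle a\rangle$ the generator $a$ is indeed non-nilpotent, which is immediate.
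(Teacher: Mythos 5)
Your proposal is correct and follows essentially the same route as the paper: reduce to the Baer-type test $\Ext_R^1(R/I,E)=0$ for nonnil ideals, use that every nonnil ideal of a discrete $\phi$-chained ring is principal with a non-nilpotent generator, and then extend a homomorphism $\langle a\rangle\to E$ via nonnil-divisibility. The only cosmetic difference is that the paper inlines the extension argument of Lemma \ref{nonnil-div-ext}$(1)\Rightarrow(3)$ instead of citing it.
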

\begin{proof} Let $I$ be a nonnil ideal of $R$. Since  $R$ is a discrete $\phi$-chained ring, then $I$ is generated by a non-nilpotent element $a\in R$.  Let $f:I\rightarrow E$ be an $R$-homomorphism. Then there exists $x\in E$ such that $f(a)=ax$ as $E$ is divisible. Define $g: R\rightarrow E$ by $g(r)=rx$. Then $g$ is an extension of $f$ to $R$. Hence $E$ is a nonnil-injective $R$-module.

\end{proof}

Recall that a regular ideal $I$ of $R$ is called \emph{invertible} if $II^{-1}=R$ where $I^{-1}=\{x\in\T(R)|Ix\subseteq R\}$. It follows from
\cite[Lemma 18.1]{H88} and \cite[Lemma 5.3]{GFT16} that a regular ideal is  invertible if and only if it is finitely generated and locally principal, if and only if it is projective.
Recall from \cite{FA04} that a nonnil ideal $I$ of a $\phi$-ring $R$ is said to be  \emph{$\phi$-invertible} provided that $\phi(I)$ is an invertible ideal of $\phi(R)$.
\begin{proposition}\label{pro-inv}
Let $R$ be a $\phi$-ring and $I$ a nonnil ideal of $R$. If $I$ is  projective over $R$, then $I$ is $\phi$-invertible.
\end{proposition}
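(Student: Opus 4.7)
The strategy is to transfer projectivity between $I$ as an $R$-module and $\phi(I)$ as a $\phi(R)$-module, and then invoke the cited equivalence in $\phi(R)$: since $\phi(I)$ is a regular ideal of $\phi(R)$ by Lemma~\ref{nonnil-regu}, $\phi(I)$ is invertible if and only if it is projective over $\phi(R)$. Thus $\phi$-invertibility of $I$ becomes a question about projectivity of $\phi(I)$ over $\phi(R)$.

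For the forward implication, I plan to use the Dual Basis Lemma. Projectivity of $I$ over $R$ yields a family $\{a_i\}_{i\in\Lambda}\subseteq I$ and functionals $\{f_i\in\Hom_R(I,R)\}$ such that each $a\in I$ admits a finite expansion $a=\sum_i f_i(a)a_i$. I would push this data down to $\phi(R)=R/K$, where $K=\ker(\phi|_R)=\{r\in R:sr=0\text{ for some }s\notin\Nil(R)\}$, by setting $\bar f_i(\phi(a)):=\phi(f_i(a))$. The delicate point is well-definedness: if $a\in I\cap K$ with $sa=0$ for some non-nilpotent $s$, then $sf_i(a)=f_i(sa)=0$, forcing $f_i(a)\in K$ and hence $\phi(f_i(a))=0$. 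Linearity and the dual-basis identity $\phi(a)=\sum_i\bar f_i(\phi(a))\phi(a_i)$ then transfer routinely, and $\phi(I)$ is projective, hence invertible, over $\phi(R)$.

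For the converse under the strongly $\phi$-ring hypothesis, the point is that $\Z(R)=\Nil(R)$ forces every non-nilpotent element to be regular; so if $sr=0$ with $s\notin\Nil(R)$, automatically $r=0$. Consequently $K=0$ and $\phi|_R\colon R\to\phi(R)$ is a ring isomorphism carrying $I$ bijectively onto $\phi(I)$. Projectivity of $\phi(I)$ over $\phi(R)$, supplied by $\phi$-invertibility together with the cited equivalence, therefore pulls back to projectivity of $I$ over $R$.

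I expect the main conceptual check to be the well-definedness of the descended functionals $\bar f_i$; everything else is essentially formal once the equivalence \emph{regular ideal is projective $\Leftrightarrow$ invertible} is in hand. The $\ZN$ condition in the converse is precisely what collapses the kernel $K$ and allows free passage between $R$ and $\phi(R)$; without it there is no obvious route to lift projectivity from $\phi(I)$ back to $I$, which is why the converse needs the extra hypothesis.
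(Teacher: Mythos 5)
Your proposal is correct and follows essentially the same route as the paper: transfer the problem to $\phi(R)$, invoke the cited equivalence ``regular projective ideal $\Leftrightarrow$ invertible'', and use $\Z(R)=\Nil(R)$ to identify $R$ with $\phi(R)$ (so $I$ with $\phi(I)$) for the converse. The only variation is in the forward transfer of projectivity: you use the Dual Basis Lemma with an explicit well-definedness check on the descended functionals $\bar f_i$, whereas the paper passes through the direct-summand-of-a-free-module characterization; your version is in fact the more careful one, since it works directly with the image $\phi(I)$ rather than tacitly identifying it with $I\otimes_R\phi(R)$.
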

\begin{proof} Since $I$ is a projective $R$-ideal,  $I$ is a direct summand of a free $R$-module $R^{(\kappa)}$. Then $\phi(I)$ is is a direct summand of a free $\phi(R)$-module $\phi(R)^{(\kappa)}$. Thus $\phi(I)$ is a projective $\phi(R)$-ideal.  Since $I$ is a nonnil ideal of $R$,
$\phi(I)$ is a regular ideal of  $\phi(R)$ by Lemma \ref{nonnil-regu}. By \cite[Lemma 5.3]{GFT16}, $\phi(I)$ is an invertible ideal of  $\phi(R)$. Thus  $I$ is $\phi$-invertible.
\end{proof}

Recall that an integral domain $R$ is a Dedekind domain if any nonzero ideal is invertible. Utilizing $\phi$-invertible, the authors in \cite{FA05} introduce $\phi$-Dedekind rings  which are  generalizations of  Dedekind domains to the context of rings that are in the class $\mathcal{H}$.
\begin{definition}%\upcite{FA05}
A $\phi$-ring $R$ is called \emph{$\phi$-Dedekind} provided that any nonnil ideal of $R$ is $\phi$-invertible.
\end{definition}

\begin{theorem}\label{asfap}
Let $R$ be a $\phi$-ring. Then the following statements are equivalent for $R$:
\begin{enumerate}
   \item $R$ is a $\phi$-Dedekind ring and a strong $\phi$-ring;

   \item   any  divisible module is nonnil-injective;

      \item   any $h$-divisible module is nonnil-injective;

      \item  any nonnil ideal of $R$ is projecitve.

\end{enumerate}
\end{theorem}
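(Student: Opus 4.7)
The plan is to close the cycle $(1)\Rightarrow(2)\Rightarrow(3)\Rightarrow(4)\Rightarrow(1)$. Two of these arrows are essentially free: $(2)\Rightarrow(3)$ because every $h$-divisible module is divisible, and $(4)\Rightarrow(1)$ is a direct application of Proposition \ref{pro-inv}, since over a strongly $\phi$-ring the projectivity of a nonnil ideal is equivalent to its $\phi$-invertibility. The content is carried by the remaining two implications.

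For $(1)\Rightarrow(2)$, I would exploit the structural characterization of $\phi$-Dedekind rings recalled in the introduction: $R$ is nonnil-Noetherian and each $R_{\fkm}$ is a discrete $\phi$-chained ring for every maximal ideal $\fkm$ of $R$. Let $E$ be a divisible $R$-module and $I$ a nonnil ideal. Since $R$ is strongly $\phi$, Lemma \ref{nonnil-div-ext} identifies divisibility with nonnil-divisibility, and Lemma \ref{local nonnil-div} pushes this property down to each $E_{\fkm}$ as a nonnil-divisible $R_{\fkm}$-module. Because $R_{\fkm}$ is a discrete $\phi$-chained ring, Proposition \ref{div-nonnil} upgrades $E_{\fkm}$ to a nonnil-injective $R_{\fkm}$-module. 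Since $R$ is nonnil-Noetherian, $R/I$ is finitely presented, so $\Ext^1$ commutes with localization and
\[
\Ext^1_R(R/I,E)_{\fkm}\;\cong\;\Ext^1_{R_{\fkm}}(R_{\fkm}/I_{\fkm},E_{\fkm})\;=\;0
\]
for every $\fkm$. A brief check that $I_{\fkm}$ stays a nonnil ideal of $R_{\fkm}$, which uses $\Z(R)=\Nil(R)$ in a strongly $\phi$-ring, is required but routine. The local vanishing implies $\Ext^1_R(R/I,E)=0$, so $E$ is nonnil-injective.

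For $(3)\Rightarrow(4)$, the argument is a standard Ext-shifting. Let $I$ be a nonnil ideal of $R$ and $M$ an arbitrary $R$-module. Embed $M$ into an injective module via a short exact sequence $0\to M\to J\to C\to 0$; then $C$ is $h$-divisible by definition, and hence nonnil-injective by hypothesis $(3)$, so $\Ext^1_R(R/I,C)=0$. The long exact sequence of $\Ext$ applied to $R/I$ gives
\[
0\;=\;\Ext^1_R(R/I,C)\;\longrightarrow\;\Ext^2_R(R/I,M)\;\longrightarrow\;\Ext^2_R(R/I,J)\;=\;0,
\]
so $\Ext^2_R(R/I,M)=0$. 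Shifting once more along $0\to I\to R\to R/I\to 0$ and using that $R$ is projective yields $\Ext^1_R(I,M)\cong\Ext^2_R(R/I,M)=0$ for every $M$, so $I$ is projective.

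The main technical obstacle I anticipate is the bookkeeping in the first implication: one needs both that $\Ext^1$ commutes with localization against the finitely presented module $R/I$ (this is where nonnil-Noetherianness is indispensable) and that nonnil ideals remain nonnil after localization in a strongly $\phi$-ring. Once those two points are in place, the entire theorem is a matter of stringing together Lemma \ref{nonnil-div-ext}, Lemma \ref{local nonnil-div}, Proposition \ref{div-nonnil} and Proposition \ref{pro-inv} along the cycle.
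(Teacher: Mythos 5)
Your proposal follows essentially the same route as the paper: the cycle $(1)\Rightarrow(2)\Rightarrow(3)\Rightarrow(4)\Rightarrow(1)$ with the same localization argument via Lemmas \ref{nonnil-div-ext} and \ref{local nonnil-div} and Proposition \ref{div-nonnil} for $(1)\Rightarrow(2)$, the same dimension shift for $(3)\Rightarrow(4)$, and Proposition \ref{pro-inv} for $(4)\Rightarrow(1)$. The only point to tighten is that finite presentation of $R/I$ alone does not make $\Ext^1$ commute with localization; as in the paper, one should note that $I$ is finitely generated projective (hence finitely presented) by Proposition \ref{pro-inv}, so both $\Hom_R(R,E)$ and $\Hom_R(I,E)$ localize correctly and the Five Lemma applies.
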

\begin{proof}  $(1)\Rightarrow(2)$: Let $E$ be a divisible module and $I$ a nonnil ideal of $R$. By \cite[Theorem 2.10]{FA05}, $R$ is nonnil-Noetherian. Then $I$ is finitely generated, and thus $R/I$ is finitely presented. Let $\m$ be a maximal ideal of $R$. Then $E_{\m}$ is a divisible module over $R_{\m}$ by Lemma \ref{nonnil-div-ext} and Lemma \ref{local nonnil-div}. By \cite[Theorem 2.10]{FA05} again, $R_{\m}$ is a discrete $\phi$-chained ring, thus $E_{\m}$ is a nonnil-injective $R_{\m}$-module by Proposition \ref{div-nonnil}. By \cite[Theorem 3.9.11]{fk16}, $\Ext_R^1(R/I,E)_{\m}=\Ext_{R_{\m}}^1(R_{\m}/I_{\m},E_{\m})=0$. Thus $\Ext_R^1(R/I,E)=0$. Therefore, $E$ is nonnil-injective.

$(2)\Rightarrow (3)$: Trival.

$(3)\Rightarrow(4)$: Let $N$ be an $R$-module, $I$ a nonnil ideal of $R$. There exists a long exact sequence as follows: $$0=\Ext_R^1(R,N)\rightarrow \Ext_R^1(I,N)\rightarrow \Ext_R^2(R/I,N)\rightarrow \Ext_R^2(R,N)=0.$$
Let $0\rightarrow N\rightarrow E\rightarrow K\rightarrow 0$ be an exact sequence where $E$ is the  injective envelope of $N$. There exists a long exact sequence as follows:
$$0=\Ext_R^1(R/I,E)\rightarrow \Ext_R^1(R/I,K)\rightarrow \Ext_R^2(R/I,N)\rightarrow \Ext_R^2(R/I,E)=0.$$
Thus $\Ext_R^1(I,N)\cong \Ext_R^2(R/I,N)\cong \Ext_R^1(R/I,K)=0$ as $K$ is nonnil-injective. It follows that $I$ is a projective ideal of $R$.

$(4)\Rightarrow(1)$:  It follows from Proposition \ref{pro-inv} that we just need to show $R$ is a strong $\phi$-ring.  Indeed, Let $a$ be non-nilpotent element in $R$. Then $\langle a\rangle$ is a projective ideal of $R$. It follows \cite[Corollary 2.6]{KMO22}  that $R$ is a strong $\phi$-ring.
\end{proof}

The next example shows that  every divisible module is not necessary nonnil-injective for  $\phi$-Dedekind rings. Thus the condition that $R$ is a strong $\phi$-ring in Theorem  \ref{asfap} cannot be removed.
\begin{example}\label{not div-de}
Let $D$ be non-field Dedekind domain   and $K$  its quotient field. Let $R=D(+)K/D$ be the idealization construction. Then $\Nil(R)=0(+)K/D$. Since $D\cong R/\Nil(R)$ is a Dedekind domain, $R$ is a  $\phi$-Dedekind ring by \cite[Theorem 2.5]{FA05}.   Denote by $U(R)$ and $U(D)$ the sets of unit elements of $R$ and $D$ respectively. Since $\Z(R)=\{(r,m)|r\in \Z(D)\cup \Z(K/D)\}=[R-\U(D)](+)K/D=R-\U(R)$ by \cite[Theorem 3.5, Theorem 3.7]{DW09}, $R$ is a total ring of quotient. Thus any $R$-module is divisible. However, since $\Nil(R)$ is not a maximal ideal of $R$, there exists an $R$-module $M$ which is not nonnil-injective by Theorem \ref{asfap-vn}.
\end{example}

Recall that an integral domain $R$ is a \Prufer\ domain if any finitely generated nonzero ideal is invertible. The following definition is a generalization of  \Prufer\ domains to the context of rings that are in the class $\mathcal{H}$ (see  \cite{FA04}).
\begin{definition}%\upcite{FA05}
A $\phi$-ring $R$ is called \emph{$\phi$-Pr\"{u}fer}  provided that any finitely generated nonnil ideal of $R$ is $\phi$-invertible.
\end{definition}

\begin{lemma}\label{w-phi-NN}
Let $R$ be an $\NP$-ring, $\p$  a  prime ideal of $R$ and $I$ an ideal of $R$. Then $I$ is nonnil over $R$ if and only if $I_{\p}$ is nonnil over $R_{\p}$.
\end{lemma}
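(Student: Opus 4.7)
The plan is to verify both implications by checking when a single element remains non-nilpotent under localization. The decisive input is the hypothesis that $R$ is an $\NP$-ring, so $\Nil(R)$ is a prime ideal; in particular $\Nil(R) \subseteq \p$ for every prime ideal $\p$ of $R$.

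For the forward direction, starting from a non-nilpotent $s \in I$, I would show that $s/1 \in I_{\p}$ is non-nilpotent in $R_{\p}$. If $(s/1)^n = 0$ in $R_{\p}$, the usual localization description yields some $u \in R \setminus \p$ with $u s^n = 0$, so $u s^n \in \Nil(R)$. Primeness of $\Nil(R)$ then forces $u \in \Nil(R)$ or $s^n \in \Nil(R)$; since $s$ is non-nilpotent, the former must hold, but $\Nil(R) \subseteq \p$ then contradicts $u \notin \p$. Hence $s/1$ witnesses that $I_{\p}$ is nonnil.

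For the reverse direction, suppose $I_{\p}$ contains an element that is non-nilpotent in $R_{\p}$, necessarily of the form $s/t$ with $s \in I$ and $t \in R \setminus \p$. I would argue that $s$ itself must be non-nilpotent in $R$: otherwise $s^n = 0$ in $R$ would give $(s/t)^n = s^n/t^n = 0$ in $R_{\p}$, contradicting the choice of $s/t$. Consequently $I$ contains a non-nilpotent element and is thus nonnil.

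The only subtle step is the use of the $\NP$-hypothesis in the forward direction; without it one could not conclude from $u s^n = 0$ that either $u$ or $s$ is nilpotent. Beyond that the argument is routine localization bookkeeping and poses no real obstacle.
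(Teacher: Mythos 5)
Your proof is correct and follows essentially the same route as the paper: the reverse direction is identical, and in the forward direction your direct appeal to primeness of $\Nil(R)$ (applied to $us^n=0$) is just a rephrasing of the paper's passage to the integral domain $R/\Nil(R)$. Both arguments use the $\NP$-hypothesis at exactly the same point, so there is nothing further to add.
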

\begin{proof}
Let $I$ be nonnil over $R$ and $x$ a non-nilpotent element in $I$. We will show the element $x/1$ in $I_{\p}$ is  non-nilpotent in $R_{\p}$. If $(x/1)^n=x^n/1=0$ in $R_{\p}$ for some positive integer $n$,  there is an $s\in R-\p$ such that $sx^n=0$ in $R$. Since $R$ is an $\NP$-ring,  $\Nil(R)$ is the minimal prime ideal of $R$. In the integral domain $R/\Nil(R)$, we have $\overline{s}\overline{x^n}=\overline{0}$, thus $\overline{x^n}=\overline{0}$ since $s\not\in  \Nil(R)$. So $x\in \Nil(R)$, a contradiction.

Let $x/s$ be a non-nilpotent element in $I_{\p}$ where $x\in I$ and $s\in R-{\p}$. Clearly, $x$ is non-nilpotent in $R$ and thus $I$ is nonnil over $R$.
\end{proof}

\begin{proposition}\label{w-phi-tor}
Let $R$ be an $\NP$-ring, $\p$  a  prime ideal of $R$ and $M$ an $R$-module. Then $M$ is $\phi$-torsion over $R$ if and only $M_{\p}$ is $\phi$-torsion over $R_{\p}$.
\end{proposition}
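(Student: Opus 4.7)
The plan is to prove both directions directly from the definition of $\phi$-torsion, using Lemma \ref{w-phi-NN} for the easy direction and a clearing-denominators argument combined with the $\NP$ hypothesis for the reverse.

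For the forward direction, I would pick an arbitrary $x/s \in M_{\p}$ with $x \in M$, $s \in R - \p$, and note that $\ann_R(x)$ is a nonnil ideal of $R$ because $M$ is $\phi$-torsion. Localizing the equation $\ann_R(x) \cdot x = 0$ gives $\ann_R(x)_{\p} \cdot (x/1) = 0$ in $M_{\p}$, and by Lemma \ref{w-phi-NN} the ideal $\ann_R(x)_{\p}$ is nonnil in $R_{\p}$. Writing $x/s = (1/s)(x/1)$ and using that $\phi$-$tor(M_{\p})$ is an $R_{\p}$-submodule then shows $x/s \in \phi$-$tor(M_{\p})$, so $M_{\p}$ is $\phi$-torsion.

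For the converse, I would fix an arbitrary $x \in M$ and apply the hypothesis to $x/1 \in M_{\p}$ to extract a nonnil ideal $J$ of $R_{\p}$ with $J \cdot (x/1) = 0$. Choose a non-nilpotent element $a/s \in J$ with $a \in R$ and $s \in R - \p$. Then $a \notin \Nil(R)$, since otherwise $a^n = 0$ in $R$ would force $(a/s)^n = 0$ in $R_{\p}$, contradicting the choice of $a/s$. The equality $(a/s)(x/1) = ax/s = 0$ in $M_{\p}$ produces some $t \in R - \p$ with $tax = 0$ in $M$. Because $\Nil(R) \subseteq \p$, the element $t$ is not nilpotent; and because $R$ is an $\NP$-ring, the nilradical $\Nil(R)$ is prime, so the product $ta$ is not nilpotent either. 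Hence $\ann_R(x)$ contains the non-nilpotent element $ta$, making it nonnil. As $x$ was arbitrary, this shows $M$ is $\phi$-torsion over $R$.

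The main obstacle is the reverse direction: converting an $R_{\p}$-annihilator into an $R$-annihilator requires clearing the denominator $s$ and then multiplying by a further element $t \in R - \p$ to pass from a vanishing in $M_{\p}$ to one in $M$. The resulting annihilator $ta$ must still be non-nilpotent, and this is precisely where the $\NP$ hypothesis is essential — without the primality of $\Nil(R)$, the product of two non-nilpotent elements could land in the nilradical and the ideal $(ta) \subseteq \ann_R(x)$ would no longer be guaranteed to be nonnil.
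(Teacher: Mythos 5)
Your proof is correct and follows essentially the same route as the paper: the forward direction localizes the annihilator and invokes Lemma \ref{w-phi-NN}, while the converse clears denominators and uses primality of $\Nil(R)$ to keep the product $ta$ non-nilpotent. If anything, your converse is slightly more explicit than the paper's about why $ta\notin\Nil(R)$, a point the paper glosses over.
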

\begin{proof} Let $M$ be an $R$-module and $x\in M$. If $M_{\p}$ is $\phi$-torsion over $R_{\p}$, there is a nonnil ideal $I_{\p}$ over $R_{\p}$ such that $I_{\p} x/1=0$ in $R_{\p}$. Let $I$ be the preimage of $I_{\p}$ in $R$. Then $I$ is nonnil  by Lemma \ref{w-phi-NN}. Thus there is a non-nilpotent element $t\in I$  such that $tkx=0$ for some $k\not\in m$. Let $s=tk$. Then we have $\langle s\rangle$ is nonnil and $\langle s\rangle x=0$. Thus $M$ is $\phi$-torsion. Suppose $M$ is $\phi$-torsion over $R$.  Let $x/s$ be an element in $M_{\p}$. Then there is a nonnil ideal $I$ such that $Ix=0$, and thus $I_{\p}x/s=0$ with $I_{\p}\in\Nil(R_{\p})$  by Lemma \ref{w-phi-NN}. It follows that $M_{\p}$ is $\phi$-torsion over $R_{\p}$.
\end{proof}

\begin{theorem}\label{asfap-prufer}
Let $R$ be a $\phi$-ring. Then the following statements are equivalent for $R$:
\begin{enumerate}
   \item $R$ is a $\phi$-\Prufer\ ring and a strong $\phi$-ring.;

   \item   any  divisible module is nonnil-FP-injective;

      \item   any $h$-divisible module is nonnil-FP-injective;

      \item  any finitely generated nonnil ideal of $R$ is projecitve;

         \item  any $($finitely generated$)$ nonnil ideal of $R$ is flat;

       \item  any  $($finitely generated$)$  ideal of $R$ is $\phi$-flat;

       \item any submodule of $\phi$-flat module is $\phi$-flat;

       \item any $R$-module has an epimorphism $\phi$-flat preenvelope;

      \item any $R$-module has an epimorphism $\phi$-flat envelope.
      \end{enumerate}
\end{theorem}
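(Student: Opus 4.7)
The plan splits the theorem into three blocks. The first block is the ideal-theoretic equivalence $(1)\Leftrightarrow(4)\Leftrightarrow(5)\Leftrightarrow(6)\Leftrightarrow(7)$. I would anchor it on Proposition~\ref{pro-inv}, which gives $(1)\Leftrightarrow(4)$ at once since $R$ is strongly $\phi$: a nonnil ideal is $\phi$-invertible exactly when it is projective. The second anchor is Zhao's characterization from \cite{Z18}, stating that a strongly $\phi$-ring $R$ is $\phi$-\Prufer\ iff every submodule of a $\phi$-flat module is $\phi$-flat iff every nonnil ideal is $\phi$-flat; this gives $(1)\Leftrightarrow(7)$ together with the auxiliary bridge ``every nonnil ideal is $\phi$-flat.'' From these two anchors I would close the block by two short arcs: $(4)\Rightarrow(5)\Rightarrow(1)$ via ``projective implies flat implies $\phi$-flat'' plugged into the bridge; and $(7)\Rightarrow(6)\Rightarrow(1)$ by noting that any ideal of $R$ is a submodule of the $\phi$-flat module $R$, so $(7)$ forces it to be $\phi$-flat (i.e.\ $(6)$), while conversely $(6)$ specializes to every nonnil ideal being $\phi$-flat.

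For the divisibility block $(2)$ and $(3)$ I would mimic the proof of Theorem~\ref{asfap}. Assuming $(1)$, let $E$ be divisible and $I$ a finitely generated nonnil ideal; by $(4)$ the ideal $I$ is projective, hence finitely presented, so $R/I$ is finitely presented. Localizing at an arbitrary maximal ideal $\fkm$, Lemma~\ref{local nonnil-div} makes $E_\fkm$ divisible over the $\phi$-chained ring $R_\fkm$; in $R_\fkm$ every finitely generated nonnil ideal is principal, so the argument of Proposition~\ref{div-nonnil} yields $\Ext_{R_\fkm}^1(R_\fkm/I_\fkm,E_\fkm)=0$, and a local-to-global comparison using \cite[Theorem~2.6.16]{fk16} lifts this to $\Ext_R^1(R/I,E)=0$. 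This suffices to conclude $E$ is nonnil-FP-injective, since every finitely presented $\phi$-torsion module fits into a finite presentation built from such $R/I$'s. The implication $(2)\Rightarrow(3)$ is trivial, and for $(3)\Rightarrow(5)$ I would run the dimension-shifting from the proof of Theorem~\ref{asfap}: embed an arbitrary $R$-module $N$ into its injective envelope $E$, note $E/N$ is $h$-divisible and hence nonnil-FP-injective by $(3)$, and splice the two $\Ext$ long exact sequences to deduce $\Ext_R^1(I,N)=0$ for every finitely generated nonnil $I$ and every $N$, whence $I$ is flat.

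For the envelope block I would use Proposition~\ref{asfap-nc}. Condition $(4)$ makes every finitely generated nonnil ideal of $R$ finitely presented, so $R$ is nonnil-coherent, and Proposition~\ref{asfap-nc} then tells us that the class of $\phi$-flat modules is preenveloping. Closure of this class under submodules (from $(7)$) lets me factor any $\phi$-flat preenvelope through its image to produce an epimorphism $\phi$-flat preenvelope, giving $(8)$. For $(8)\Rightarrow(9)$ I would invoke a Wakamatsu-style shrinking argument, using that the $\phi$-flat class is closed under extensions and direct limits. Finally, $(9)\Rightarrow(1)$ would follow by applying the epimorphism $\phi$-flat envelope to $R/I$ for a finitely generated nonnil $I$ and chasing through the resulting short exact sequence and the Tor long exact sequence to recover Zhao's ``every nonnil ideal is $\phi$-flat'' criterion. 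The main obstacle I expect is the envelope promotion $(8)\Rightarrow(9)$, which needs careful use of Wakamatsu's lemma together with the full closure properties of $\phi$-flat modules; secondarily, the dimension-shifting in $(3)\Rightarrow(5)$ must be performed carefully so that $h$-divisibility of $E/N$ detects flatness of $I$ at the level of nonnil testing ideals rather than projectivity, which is precisely the feature distinguishing Theorem~\ref{asfap-prufer} from Theorem~\ref{asfap}.
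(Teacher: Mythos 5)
Your overall decomposition is reasonable and several arcs (the $(1)\Leftrightarrow(4)$ anchor via Proposition~\ref{pro-inv}, the dimension shifting for $(3)$, the factor-through-the-image trick for epimorphic preenvelopes) match the paper. But there are two genuine gaps. The first is in $(1)\Rightarrow(2)$: you prove $\Ext_R^1(R/I,E)=0$ for every finitely generated nonnil ideal $I$ and then assert this suffices because every finitely presented $\phi$-torsion module ``fits into a finite presentation built from such $R/I$'s.'' That reduction is precisely the missing content. Nonnil-FP-injectivity is tested against arbitrary finitely presented $\phi$-torsion modules $T$, and there is no Baer-type criterion reducing this test to cyclic modules $R/I$ (the criterion of \cite[Theorem 1.7]{ZZ19} is only for nonnil-injectivity); a filtration of $T$ by cyclic subquotients produces annihilator ideals that need not be finitely generated, so the vanishing you established does not apply to them. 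The paper closes this by localizing $T$ itself: by Proposition~\ref{w-phi-tor}, $T_{\fkm}$ is a finitely presented $\phi$-torsion module over the strongly $\phi$-chained ring $R_{\fkm}$, hence decomposes as $\oplus_{i=1}^n R_{\fkm}/R_{\fkm}x_i$ by \cite[Theorem 4.1]{Z18}, and divisibility of $E_{\fkm}$ then kills each $\Ext^1_{R_{\fkm}}(R_{\fkm}/R_{\fkm}x_i,E_{\fkm})$. You need this decomposition step (or an equally explicit substitute).

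The second gap is $(8)\Rightarrow(9)$. Wakamatsu's lemma together with closure under extensions and direct limits is the machinery for promoting precovers to covers; it does not turn preenvelopes into envelopes. The paper never proves $(8)\Rightarrow(9)$: it proves $(9)\Rightarrow(8)$ trivially, $(8)\Rightarrow(7)$ by observing that an epimorphic $\phi$-flat preenvelope of a submodule of a $\phi$-flat module is forced to be a monomorphism, and then $(1)+(4)+(7)\Rightarrow(9)$ via Corollary~\ref{208} (that is, \cite[Corollary 6.3.5]{EJ00}), whose hypothesis --- closure of the $\phi$-flat class under inverse limits --- is supplied by closure under direct products (nonnil-coherence) and under submodules (condition $(7)$); epimorphicity of the resulting envelope is then obtained by factoring through its image. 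As proposed, your cycle does not close: $(9)$ is never derived from $(1)$, and $(8)$ is not shown to imply anything already known. Two smaller points: your dimension shifting for $(3)$ in fact yields that $I$ is projective, i.e.\ condition $(4)$ rather than merely $(5)$ (this is what the paper does, and your worry about ``detecting flatness rather than projectivity'' is unfounded); and $(4)\Rightarrow(5)$ for a non-finitely-generated nonnil ideal requires writing it as a direct limit of finitely generated nonnil subideals, not just ``projective implies flat.''
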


\begin{proof}  $(1)\Rightarrow(2)$: Let  $T$ be a finitely presented $\phi$-torsion module and $\m$  a maximal ideal of $R$. Then by Proposition \ref{w-phi-tor}, $T_{\m}$ is a finitely presented $\phi$-torsion $R_{\m}$-module.
By \cite[Corollary 2.10]{FA04}, $R_{\m}$ is a $\phi$-chained ring. Since $R$ is a strong $\phi$-ring, $R_{\m}$ is a strong $\phi$-ring.    Thus  $T_{\m}\cong \oplus_{i=1}^n R_{\m}/R_{\m}x_i$ for some regular element $x_i\in R_{\m}$ by \cite[Theorem 4.1]{Z18}. Let $E$ be a divisible module. Then $E_{\m}$ is a divisible module over $R_{\m}$ by Lemma \ref{nonnil-div-ext} and Lemma \ref{local nonnil-div}.
 Thus $\Ext_R^1(T,E)_{\m}=\Ext_{R_{\m}}^1(T_{\m},E_{\m})=\oplus_{i=1}^n\Ext_{R_{\m}}^1( R_{\m}/R_{\m}x_i, E_{\m})=0$ by  Lemma \ref{nonnil-div-ext} and \cite[Theorem 3.9.11]{fk16}. It follows that $\Ext_R^1(T,E)=0$. Therefore, $E$ is nonnil-FP-injective.

$(2)\Rightarrow  (3)$: Trival.

$(3)\Rightarrow(4)$: Let $N$ be an $R$-module, $I$ a finitely generated nonnil ideal of $R$. The short exact sequence $0\rightarrow I\rightarrow R \rightarrow R/I\rightarrow 0$ induces a long exact sequence as follows: $$0=\Ext_R^1(R,N)\rightarrow \Ext_R^1(I,N)\rightarrow \Ext_R^2(R/I,N)\rightarrow \Ext_R^2(R,N)=0$$
Let $0\rightarrow N\rightarrow E\rightarrow K\rightarrow 0$ be an exact sequence where $E$ is the  injective envelope of $N$. There exists a long exact sequence as follows:
$$0=\Ext_R^1(R/I,E)\rightarrow \Ext_R^1(R/I,K)\rightarrow \Ext_R^2(R/I,N)\rightarrow \Ext_R^2(R/I,E)=0$$
Thus $\Ext_R^1(I,N)\cong \Ext_R^2(R/I,N)\cong \Ext_R^1(R/I,K)=0$ as $K$ is nonnil-FP-injective. It follows that $I$ is a projective ideal of $R$.

$(4)\Rightarrow(1)$: It follows from Proposition \ref{pro-inv} and Theorem \ref{asfap}.

$(4)\Rightarrow(5)$: Let $I$ be nonnil ideal of $R$ and $a$ a non-nilpotent element in $I$. Let $\{I_i\}_{i\in \Gamma}$ be a family of finitely generated sub-ideal of $I$ such that $\lim\limits_{\longrightarrow }I_i=I$. Set $I'_i=\langle I,a\rangle$,  then $I'_i$ is a finitely generated nonnil ideal of $R$ such that $\lim\limits_{\longrightarrow }I'_i=I$. Since each $I'_i$ is projective by (4), $I$ is a flat ideal of $R$.

$(5)\Leftrightarrow(6)$:  Let $I$ be a (resp., finitely generated) nonnil ideal of $R$ , $J$ an (resp., a finitely generated) ideal of $R$. Then we have $\Tor_1^R(R/J, I)\cong\Tor_2^R(R/I, R/J)\cong\Tor_1^R(R/I, J)$. Thus $I$ is flat if and only if $J$ is $\phi$-flat.

$(5)\Rightarrow(1)$: It follows \cite[Corollary 2.6]{KMO22}  that $R$ is a strong $\phi$-ring. Let $K,\ L$ be non-zero (resp., finitely generated) ideals of $R/\Nil(R)$ $($denoted by $\overline{R})$. Then $K=I/\Nil(R)$ and $L=J/\Nil(R)$ for some (resp., finitely generated) nonnil ideals $I,\ J$ of $R$ (see \cite[Lemma 2.4]{FA04}). By \cite[Lemma 1.6]{ZxlZ20}, $J\Nil(R)=\Nil(R)$. Thus $L=J/\Nil(R)\cong J\otimes_R \overline{R}$ .
We claim that $\Tor_1^{\overline{R}}(\overline{R}/K,L)=0$. Indeed, by change of rings, the exact sequence of  $\overline{R}$-modules: $$0\rightarrow \Tor_1^{\overline{R}}(\overline{R}/K,J\otimes_R \overline{R}) \rightarrow K\otimes_{\overline{R}}J\otimes_R\overline{R}\rightarrow  \overline{R}\otimes_{\overline{R}}J\otimes_R\overline{R} \rightarrow  \overline{R}/K\otimes_{\overline{R}}J\otimes_R\overline{R}\rightarrow 0$$
is naturally isomorphic to
$$0\rightarrow \Tor_1^{\overline{R}}(\overline{R}/K,J\otimes_R \overline{R}) \rightarrow K\otimes_{R}J\rightarrow  \overline{R}\otimes_{R}J \rightarrow  \overline{R}/K\otimes_{R}J\rightarrow 0.$$
Thus there is a commutative diagram of $R$-modules:
$$\xymatrix@R=25pt@C=20pt{
0\ar[r]^{} &  \Tor_1^R(R/I,J)  \ar[d]_{f}\ar[r]^{}               & I\otimes_R J\ar[d]_{g}\ar[r]^{}      & R\otimes_R J \ar[d]^{h}\ar[r]^{} & R/I\otimes_RJ\ar[r]^{}\ar[d]^{\cong}& 0\\
0\ar[r]^{} & \Tor_1^{\overline{R}}(\overline{R}/K,J\otimes_R\overline{R}) \ar[r]^{} &    K\otimes_{R}J \ar[r]^{}   &\overline{R}\otimes_{R}J \ar[r]^{}   & \overline{R}/K\otimes_{R}J \ar[r]^{}    &  0.\\}$$
Since $g$ and $h$ are epimorphisms, $f$ is also an epimorphism by the Five Lemma (see \cite[Theorem 1.9.9]{fk16}). By (5) $J$ is flat, then $\Tor_1^{R}(R/I,J)=0$. Thus $\Tor_1^{\overline{R}}(\overline{R}/K,L) \cong \Tor_1^{\overline{R}}(\overline{R}/K,J\otimes_R \overline{R})=0$. Consequently, $\overline{R}=R/\Nil(R)$ is a \Prufer\ domain. By \cite[Corollary 2.10]{FA04}), $R$ is a $\phi$-\Prufer\ ring.

$(5)\Rightarrow (7)$:  Let $M$ be a $\phi$-flat module and $N$ a submodule of $M$. Let $I$ be a nonnil ideal of $R$, then $I$ is flat by $(6)$. Thus $\fd_R(R/I)\leq 1$. By considering the long exact sequence $\Tor_2^R(R/I,M/N)\rightarrow \Tor_1^R(R/I,N)\rightarrow \Tor_1^R(R/I,M) $, we have $\Tor_1^R(R/I,N)=0$ as $\Tor_2^R(R/I,M/N)=\Tor_1^R(R/I,M) =0$. Thus $N$ is $\phi$-flat.

 $(7)\Rightarrow (6)$ and $(9)\Rightarrow (8)$ : Trivial.

$(8)\Rightarrow (7)$: Let $F$ be a $\phi$-flat module, $i:K\rightarrowtail F$ a monomorphism and $f:K\twoheadrightarrow F'$ an epimorphism $\phi$-flat preenvelope. Then there exists an homomorphism $g:F'\rightarrow F$ such that $i=gf$. Thus $f$ is  a monomorphism. Cosequently,  $K\cong F'$ is $\phi$-flat.

$(1)+(4)+(7)\Rightarrow (9)$: Let $R$ be a $\phi$-\Prufer\ ring and $I$  a finitely generated nonnil ideal of $R$. By (4), $I$ is projective and thus finitely presented. It follows that $R$ is nonnil-coherent. Thus the class of $\phi$-flat modules is preenveloping by Proposition \ref{asfap-nc}.   Let $\{F_i|i\in I\}$ be a family of $\phi$-flat modules.  Then $\prod_{i\in I} F_i$ is $\phi$-flat by \cite[Theorem 2.4]{aa16}. By (7), the class of $\phi$-flat modules is closed under submodules. Thus the class of $\phi$-flat modules is closed under inverse limits. By corollary \ref{208}, the class of $\phi$-flat modules is enveloping.

We claim that the $\phi$-flat envelope of any $R$-module $M$ is an epimorphism. Indeed,  suppose $f: M\rightarrow F$ be $\phi$-flat envelope of $M$. Let $f=h\circ g$ with $g:M\twoheadrightarrow \Im{f}$  an epimorphism and $f:\Im{f}\rightarrowtail F$ the embedding map. We will show $g$ is the the $\phi$-flat envelope of $M$. For any  $f': M\rightarrow F'$ with $F'$ $\phi$-flat, there exists $l:F\rightarrow F'$ such that  $l\circ f=f'$. Then $g\circ h\circ l=f'$, and thus  $g$ is a $\phi$-flat  preenvelope of $M$ as $\Im{f}$ is $\phi$-flat by (7).  Suppose $a:\Im{f}\rightarrow\Im{f}$ such that $g=a\circ g$. Then $a$ is an epimorphism. Consider the following commutative diagram:
$$\xymatrix@R=25pt@C=25pt{
  M \ar@{->>}[r]^{g}\ar@{=}[d] & \Im f \ar[d]_{a}\ar@{>->}[r]^{h} & F\ar@{.>}[d]_{b} \\
  M \ar@{->>}[r]^{g} &\Im f \ar@{>->}[r]^{h} & F\\
}$$
Since $f=h\circ g$ is an $\phi$-flat envelope, there exists $b:F\rightarrow F$ such that $b\circ f=b\circ h\circ g=h\circ a\circ g=h\circ g=f$. Since $g$ is an epimorphism, $h\circ a=b\circ h$.
Then $a$ is a monomorphism, and thus  $a$  is an isomorphism. It follows that $g$ is the $\phi$-flat envelope of $M$.
\end{proof}

\begin{remark}\label{not phi-tf-phi-flat}
Actually, Zhao \cite[Theorem 4.3]{Z18} showed that if $R$ is a strong $\phi$-ring, then $R$ is a $\phi$-\Prufer\ ring if and only if each submodule of a $\phi$-flat $R$-module is $\phi$-flat, if and only if each nonnil ideal of $R$ is $\phi$-flat, if and only if finitely generated nonnil ideal of $R$ is $\phi$-flat. In Theorem \ref{asfap-prufer}, we give some simple versions of \cite[Theorem 4.3]{Z18}  and several new characterizations $\phi$-\Prufer\ ring using divisible modules, nonnil-FP-injective modules and  the epimorphic enveloping properties of  $\phi$-flat modules.
\end{remark}

The final example shows that  every divisible $R$-module is not necessary nonnil-FP-injective for $\phi$-\Prufer\ rings. Thus the condition that $R$ is a strong $\phi$-ring in Theorem \ref{asfap-prufer} also cannot  be removed.
\begin{example}\label{not div-de-pru}
Let $D$ be  non-field \Prufer\ domain and $K$  its quotient field. Let $R=D(+)K/D$ be the idealization construction. As in Example  \ref{not div-de}, we can show $R$ is a $\phi$-\Prufer\ ring and total ring of quotient. Thus any $R$-module is divisible. However, since $\Nil(R)$ is not a maximal ideal of $R$, the Krull dimension of $R>1$. Thus there exists an $R$-module $M$ which is not nonnil-FP-injective by Theorem \ref{asfap-vn}.
\end{example}

\begin{acknowledgement}\quad\\
The first author was supported by the Natural Science Foundation of Chengdu Aeronautic Polytechnic (No. 062026) and the National Natural Science Foundation of China (No. 12061001).
\end{acknowledgement}

\bigskip

\end{document}